\tikzset{middlearrow/.style={
		decoration={markings,
			mark= at position 0.5 with {\arrow[very thick]{#1}} ,
		},
		postaction={decorate}
	}
}
\tikzset{lefttlearrow/.style={
		decoration={markings,
			mark= at position 0.7 with {\arrow[very thick]{#1}} ,
		},
		postaction={decorate}
	}
}
\tikzset{leftlearrow/.style={
		decoration={markings,
			mark= at position 0.6 with {\arrow[very thick]{#1}} ,
		},
		postaction={decorate}
	}
}
\newtheorem{theorem}{Theorem}[section]
\newtheorem{conjecture}{Conjecture}[section]
\newtheorem{definition}{Definition}
\newtheorem{lemma}[theorem]{Lemma}
\newtheorem{claim}{Claim}
\newtheorem{proposition}[theorem]{Proposition}
\title{Ore-type condition for antidirected Hamilton cycles in oriented graphs}
\author{Junqing Cai\thanks{JQC: School of Mathematical Science, Tianjin Normal University, Tianjin 300387, China. \texttt{caijq09@163.com}. The author was supported by National Natural Science Foundation of China (No.12371356).} \and    
	Guanghui Wang\thanks{GHW: School of Mathematics,  Shandong University, Jinan 250100, China.   \texttt{ghwang@sdu.edu.cn}. The author was supported by National Natural Science Foundation of China (No.12231018) and State Key Laboratory of Cryptography and Digital Economy Security.} \and
	Yun Wang\thanks{YW: Data Science Institute, Shandong University, Jinan 250100, China. \texttt{yunwang@sdu.edu.cn}. The author was supported by China Postdoctoral Science Foundation (No. 2025M773101) and National Natural Science Foundation of China (No.12501489).}
	\and
	Zhiwei Zhang\thanks{ZWZ: Interdisciplinary Center, Shandong University, Jinan 250100, China. \texttt{zhiweizh@mail.sdu.edu.cn}.}}
\date{}
\begin{document} 
	\maketitle
	\begin{abstract}
		An antidirected cycle in a digraph $G$ is a subdigraph whose underlying graph is a cycle, and in which no two consecutive edges form a directed path in $G$. Let $\sigma_{+-}(G)$ be the minimum value of $d^+(x)+d^-(y)$ over all pairs of vertices $x, y$ such that there is no edge from $x$ to $y$, that is, $$\sigma_{+-}(G)=\min\{d^+(x)+d^-(y): \{x,y\}\subseteq V(G), xy\notin E(G)\}.$$
		In 1972, Woodall extended Ore's theorem to digraphs by showing that every digraph $G$ on $n$ vertices with $\sigma_{+-}(G)\geqslant n$ contains a directed Hamilton cycle. Very recently, this result was generalized to oriented graphs under the condition $\sigma_{+-}(G)\geqslant(3n-3)/4$. In this paper, we give the exact Ore-type degree threshold for the existence of antidirected Hamilton cycles in oriented graphs.  More precisely, we prove that for sufficiently large even integer $n$, every oriented graph $G$ on $n$ vertices with $\sigma_{+-}(G)\geqslant(3n+2)/4$  contains an antidirected Hamilton cycle. Moreover, we show that this degree condition is best possible.
	\end{abstract}
	
	\maketitle
	
	\noindent{\bf Keywords:}  \texttt{antidirected cycle,  Ore-type condition, oriented graph, Hamilton cycle}

	\section{Introduction}\label{SEC-introduction}
	
	The exploration of Hamiltonicity of graphs has a long-standing history and well-studied in graph theory, starting with graphs, where numerous classical results have laid the foundation for subsequent research. As the study progressed, extending these results to digraphs and oriented graphs became a central pursuit, driven by both theoretical interest and practical applications. In this introduction, we first review key developments in the study of Hamilton cycles in graphs, then turn to the more intricate setting of digraphs and oriented graphs (digraphs without directed 2‑cycles), providing context for the contributions of this paper.

	\subsection{Hamilton cycles in graphs}
	
	A Hamilton cycle in a  graph $G$ is a cycle that visits every vertex of $G$ exactly once. A graph containing such a cycle is called Hamiltonian. Determining whether a graph is Hamiltonian is an NP-complete problem, which motivates the search for sufficient conditions that guarantee Hamiltonicity, particularly those based on degree conditions.
	
	A cornerstone result is Dirac’s theorem \cite{diracPLMS2}, which states that every graph on $n\geqslant 3$ vertices with minimum degree at least $n/2$ is Hamiltonian. Ore \cite{oreAMM67} later strengthened Dirac’s theorem by showing that a graph on $n\geqslant 3$ vertices is Hamiltonian if $d(x)+d(y)\geqslant n$ for every pair of non‑adjacent vertices $x$ and $y$. Further generalizations include Fan’s theorem \cite{fan1984new}, which considers all pair of vertices with distance two; Pósa’s theorem \cite{posaPMIHAS7}, which allows many vertices with small degrees; and Chvátal’s theorem \cite{chvatalJCTB12}, which gives a complete characterization of degree sequences that force Hamiltonicity. For comprehensive surveys, see \cite{chibaGC34} and \cite{gouldGC19}.
	
	\subsection{Hamilton cycles in digraphs and oriented graphs}
	
	Hamiltonicity in digraphs, presents distinct challenges and has attracted considerable attention. A natural analogue of the minimum degree is the \emph{minimum semidegree} $\delta^0(G)$, defined as the minimum over all in‑degrees and out‑degrees of the vertices of $G$. Ghouila-Houri \cite{ghouilaCRASP25} extended Dirac’s theorem to digraphs by proving that every digraph on $n$ vertices with $\delta^0(G)\geqslant n/2$ contains a directed Hamilton cycle. For oriented graphs, Keevash, K{\"u}hn and Osthus \cite{keevashJLMS79} established the tight bound $\delta^0(G)\geqslant (3n-4)/8$ for the existence of a directed Hamilton cycle in large oriented graphs.
	
	In 1972,  Woodall \cite{woodallPLMS24} obtained a directed analogue of Ore’s theorem. For a digraph $G$, define the Ore-type parameter
	$$\sigma_{+-}(G)=\min\{d^+(x)+d^-(y): \{x,y\}\subseteq  V(G), xy\notin E(G)\}.$$
	
	\begin{theorem}[\cite{woodallPLMS24}]\label{THM-woodall}
		Every $n$-vertex digraph $G$ with $\sigma_{+-}(G)\geqslant n$ contains a directed Hamilton cycle.
	\end{theorem}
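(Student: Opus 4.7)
The plan is to adapt the extremal-cycle argument (as used in the proof of Ghouila-Houri's directed Dirac theorem); the undirected rotation-extension technique of Ore does not carry over cleanly, since reversing a subpath of a directed path destroys its orientation, so one should argue via a longest directed cycle rather than a Hamilton path.

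\textbf{Strong connectivity.} First, observe that $\sigma_{+-}(G)\geqslant n$ forces $G$ to be strongly connected: if $V(G)=A\cup B$ is a non-trivial partition with no arc from $B$ to $A$, then for any $b\in B$ and $a\in A$ the pair $(b,a)$ is a non-arc yet $d^+(b)+d^-(a)\leqslant(|B|-1)+(|A|-1)=n-2<n$, a contradiction.

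\textbf{Longest cycle and insertion obstruction.} Let $C=v_1v_2\cdots v_kv_1$ (indices cyclic) be a longest directed cycle and suppose for contradiction that $k<n$; set $U:=V(G)\setminus V(C)$. The key structural observation is the following insertion obstruction: for any directed path $u_1\to u_2\to\cdots\to u_\ell$ with all vertices in $U$,
\[
|N^+(u_\ell)\cap V(C)|+|N^-(u_1)\cap V(C)|\leqslant k,
\]
since otherwise there would be some index $i$ with $v_i\to u_1$ and $u_\ell\to v_{i+1}$, producing the strictly longer cycle $v_1\cdots v_iu_1\cdots u_\ell v_{i+1}\cdots v_kv_1$. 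Applying this to a directed path inside $U$ whose endpoints $(u_\ell,u_1)$ are non-adjacent in $G$ (which exists whenever $|U|\geqslant 2$), together with the trivial bound $|N^+(u_\ell)\cap U|+|N^-(u_1)\cap U|\leqslant 2(|U|-1)=2(n-k-1)$ and the hypothesis $d^+(u_\ell)+d^-(u_1)\geqslant n$, yields $n\leqslant k+2(n-k-1)$, forcing $k\leqslant n-2$.

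\textbf{The degenerate case and main obstacle.} The remaining case $k=n-1$ (so $U=\{u\}$) is the main obstacle. Strong connectivity and the insertion obstruction give $|N^+(u)|,|N^-(u)|\geqslant 2$ together with $|N^+(u)|+|N^-(u)|\leqslant n-1$. Since there is no non-adjacent pair inside $U$ to feed into the Ore-type hypothesis, I would instead form non-arcs of the form $(u,v_{i+1})$ with $v_i\to u$ (the insertion obstruction then forces $uv_{i+1}\notin E(G)$) and symmetrically $(v_{i-1},u)$ with $u\to v_i$. Summing the resulting $\sigma_{+-}$ inequalities over all such indices and using $u\notin N^-(v_{i+1})$ to bound $d^-(v_{i+1})\leqslant n-2$ (and dually $d^+(v_{i-1})\leqslant n-2$), combined with a double-counting of the arcs incident to $V(C)$, should push past the bound $|N^+(u)|+|N^-(u)|\leqslant n-1$ and produce the final contradiction. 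Balancing these asymmetric in-/out-degree inequalities in this degenerate $|U|=1$ situation is the most delicate step of the argument.
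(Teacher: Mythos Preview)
The paper does not supply a proof of this statement; Theorem~\ref{THM-woodall} is quoted as Woodall's classical result and merely cited, so there is no argument in the paper to compare your attempt against.

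On its own merits, your proposal has a genuine gap in the case $|U|\geqslant 2$. You take a longest directed cycle $C$ of length $k<n$, and when $|U|=n-k\geqslant 2$ you combine the insertion bound $|N^+(u_\ell)\cap V(C)|+|N^-(u_1)\cap V(C)|\leqslant k$ with the trivial bound $|N^+(u_\ell)\cap U|+|N^-(u_1)\cap U|\leqslant 2(n-k-1)$ and the Ore hypothesis to obtain $n\leqslant k+2(n-k-1)$, i.e.\ $k\leqslant n-2$. But $|U|\geqslant 2$ \emph{is} the statement $k\leqslant n-2$: you have re-derived your case hypothesis, not a contradiction. So when you write ``the remaining case $k=n-1$'', nothing has in fact been eliminated --- the case $k\leqslant n-2$ is still completely open. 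A secondary unjustified step in the same paragraph is the assertion that a directed path in $U$ with $u_\ell u_1\notin E(G)$ exists whenever $|U|\geqslant 2$; this fails, for instance, if $G[U]$ is a complete digraph with both arcs present between every pair.

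The usual route to Woodall's theorem does not look for a non-arc inside $U$ at all. One anchors on $C$: pick $v_i\to u_1$ with $u_1\in U$ (strong connectivity), take a longest directed path $u_1\to\cdots\to u_m$ in $G[U]$ so that $N^+(u_m)\cap U\subseteq\{u_1,\dots,u_{m-1}\}$, note that $u_m v_{i+1}\notin E(G)$ by the insertion obstruction, and apply $\sigma_{+-}$ to the pair $(u_m,v_{i+1})$. Closing the contradiction then requires bounding $d^-(v_{i+1})$ via several simultaneous insertion obstructions along $C$; that counting is the real work, and it is needed for every $k<n$, not only for $k=n-1$. Your sketch for the degenerate case points in this direction but would have to be carried out in full generality.
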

	
	For oriented graphs, Kelly, K{\"u}hn and Osthus \cite{kellyCPC17} proved that for every $\varepsilon >0$, there exists an integer $n_0$ such that every oriented graph on $n\geqslant n_0$ vertices with $\sigma_{+-}(G)\geqslant(3/4+\varepsilon )n$ contains a directed Hamilton cycle. Very recently, Chang et al.  \cite{changarXiv2025} gave the exact Ore-type degree threshold for a directed Hamilton cycle in oriented graphs.
	
	\begin{theorem}[\cite{changarXiv2025}]\label{THM-changarXiv2025}
		There exists an integer $n_0$ such that every oriented graph $G$ on $n\geqslant n_0$ vertices with $\sigma_{+-}(G)\geqslant(3n-3)/4$ contains a directed Hamilton cycle.
	\end{theorem}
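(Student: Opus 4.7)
The plan is to establish Theorem~\ref{THM-changarXiv2025} via the standard extremal/non-extremal dichotomy that has been developed for exact-threshold problems in oriented Hamiltonicity, building on the approximate result of Kelly, K\"uhn and Osthus. First I fix a small constant $\alpha>0$ and declare $G$ to be $\alpha$-\emph{extremal} if $V(G)$ admits a partition that closely mimics the structure of the sharp examples witnessing the optimality of $(3n-3)/4$; such extremal digraphs are typically blow-ups of small tournaments or bipartite-like oriented graphs whose densities just barely violate the Ore-type bound. The proof then branches depending on whether $G$ is $\alpha$-extremal.

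In the non-extremal case, I would run the absorption-plus-regularity machinery. The bound $\sigma_{+-}(G)\geqslant(3n-3)/4$ is comfortably above the approximate threshold $(3/4+\varepsilon)n$ used by Kelly, K\"uhn and Osthus, so standard randomised constructions yield (i) a short \emph{absorbing} directed path $P_A$ that can swallow any tiny leftover set into a directed path, and (ii) a \emph{reservoir} set $R$ whose vertices are connectable to almost any pair. After removing $P_A\cup R$, one applies the directed regularity lemma, extracts a reduced digraph whose semidegree inherits a $(3/4-o(1))n$-type Ore condition, and finds there a directed Hamilton-like structure. The directed blow-up lemma then turns this into an almost-spanning collection of long directed paths in $G$; their endpoints are connected through $R$ using the connectivity supplied by the Ore-type bound, and the unused vertices are absorbed into $P_A$ to close the cycle.

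The extremal case is handled by a direct combinatorial analysis. One identifies the (finite) list of tight configurations, argues that any $\alpha$-extremal $G$ must lie close to exactly one of them, and in each subcase writes down a Hamilton cycle explicitly. The argument proceeds by cataloguing the handful of ``bad'' vertices that deviate from the near-extremal partition, routing them first through short directed paths stitched together using the slack between $\sigma_{+-}(G)\geqslant(3n-3)/4$ and the strictly smaller Ore-value of the extremal examples, and then completing the cycle inside each part by invoking a Hamilton-connectedness statement (itself proved by a Dirac-style argument on the dense induced sub-oriented graphs).

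The hard part is the extremal case: one has to identify all near-tight configurations, rule out any gap between the extremal and non-extremal regimes so that no $G$ escapes both cases, and then verify by hand that each near-tight structure still admits a directed Hamilton cycle once the Ore bound is met rather than just missed. The non-extremal half, while technically intricate, is by now routine given the right absorber and the directed connecting lemma; the real subtlety is keeping the absorbing path, reservoir, and connecting paths compatible with the \emph{orientation} requirements, since in an oriented graph we cannot reverse an arc to fix a local obstruction as one could in a general digraph.
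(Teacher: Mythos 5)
This statement is not proved in the paper you were given: Theorem~\ref{THM-changarXiv2025} is cited from Chang, Cheng, Dai, Ouyang and Wang (reference~\cite{changarXiv2025}) and is used as a black box. There is therefore no in-paper proof to compare against. What the paper \emph{does} prove in detail is the analogous exact threshold for antidirected Hamilton cycles (Theorem~\ref{THM-mainthm}), and the route it takes there is instructive for judging your sketch.

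Your proposal correctly identifies the extremal/non-extremal dichotomy as the right overarching framework, but as written it would not go through, for two reasons. First, the sentence ``the bound $\sigma_{+-}(G)\geqslant(3n-3)/4$ is comfortably above the approximate threshold $(3/4+\varepsilon)n$'' has the inequality backwards: $(3n-3)/4 = 3n/4 - 3/4 < (3/4+\varepsilon)n$, so the exact result is strictly \emph{stronger} than the Kelly--K\"uhn--Osthus approximate one. You cannot simply invoke their theorem in the non-extremal case; the whole point of the exact result is to close the gap below $(3/4+\varepsilon)n$. Second, what you describe is a strategy outline rather than a proof: the extremal configurations are never identified (yet they are the heart of any exact-threshold argument — see Proposition~\ref{PROP-degreesharp} and Figure~\ref{FIG-degreesharp} for the delicate tripartite-like structures that actually arise), and the regularity/blow-up/absorption machinery is asserted to exist without being specified for the oriented, Ore-type setting.

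On the methodological comparison: the present paper's analogous argument (Theorem~\ref{THM:non-expander-case} together with Lemmas~\ref{LEM:non-expander-case}--\ref{LEM-ACpath}) does not use regularity plus absorption at all. Instead it uses the \emph{robust outexpander} framework: if $G$ is a robust $(\nu,\tau)$-outexpander, Taylor's theorem (Theorem~\ref{THM-expanderanyori}) and the Ore-to-semidegree lemma (Proposition~\ref{PROP:Ore-implies-semideg}) finish immediately; if not, one extracts a ``nice partition'' $(A,B,C,D)$ directly from the failure of expansion, classifies good/bad vertices, builds a short structured path by hand, and closes up via Theorem~\ref{THM-HPfixendv}. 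This replaces your proposed absorber--reservoir--regularity--blow-up chain with a single expansion dichotomy and an explicit path construction, which is both shorter and avoids the compatibility issues you flag at the end of your sketch. If you want your plan to become a proof, you should (i) fix the inequality, (ii) write down the extremal family explicitly, and (iii) either carry out the absorption machinery in the oriented setting or, more in line with current practice and the present paper, switch to the robust-outexpander dichotomy.
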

	
	Instead of insisting on a consistently directed Hamilton cycle, one may ask whether a digraph contains some orientation of a Hamilton cycle under suitable degree conditions. H{\"a}ggkvist and Thomason \cite{haggkvist1995oriented} proved that every sufficiently large $n$-vertex digraph $G$ with $\delta^0(G) > n/2 + n^{5/6}$ contains every orientation of a Hamilton cycle. Twenty years later, Debiasio et al. \cite{debiasioSJDM29} completely determined the semidegree threshold for this property in digraphs, showing that every large digraph $G$ with $\delta^0(G)\geqslant  n/2$ contains every orientation of a Hamilton cycle, with the possible exception of the antidirected orientation. For oriented graphs, Wang, Wang and Zhang \cite{wangarxiv2025} proved that the minimum semidegree threshold for all possible orientations of a Hamilton cycle is $\delta^0(G)\geqslant (3n-1)/8$. Very recently, Debiasio and Treglown \cite{debiasioarXiv2025} considered the minimum degree $\delta(G)$ of a digraph, i.e., the minimum number of edges incident to a vertex, and proved that for all \(\eta > 0\), every sufficiently large digraph on $n$ vertices with $\delta(G) > (1 + \eta)n$ contains every orientation of a Hamilton cycle, with a specified exception concerning directed cycles in non-strongly connected digraphs.
	
	Among all orientations, \emph{antidirected} Hamilton cycles, i.e., cycles in which no two consecutive edges have the same direction, are of particular interest. Their existence depends delicately on both local and global structures, often requiring more refined degree conditions. Diwan et al. \cite{diwanDM311} showed that determining whether a digraph contains an antidirected cycle-factor is NP-complete. In 2015, DeBiasio and Molla \cite{debiasioEJC22} proved that every large digraph $G$ of even order $n$ with $\delta^0(G)\geqslant n/2$ contains an antidirected Hamilton cycle except for two explicit counterexamples. For oriented graphs, a consequence of the result in \cite{wangarxiv2025} is that every large oriented graph $G$ on even $n$ vertices with $\delta^0(G)\geqslant (3n-1)/8$  contains an antidirected Hamilton cycle. Furthermore, Stein and Z\'arate-Guer\'en \cite{stein2024antidirected} studied antidirected cycles of intermediate lengths, proving that for any \(\eta \in (0,1)\), there exists \(n_0\) such that for all \(n > n_0\) and \(k > \eta n\), every oriented graph \(G\) on \(n\) vertices with \(\delta^0(G) > (1+\eta)k\) contains any antidirected cycle of length at most \(k\).
	
	\subsection{Main result}
	
	In this paper, we establish an Ore‑type degree condition that guarantees an antidirected Hamilton cycle in oriented graphs. Our main result is as follows. 
	
	\begin{theorem}\label{THM-mainthm}
		There exists an integer $n_0$ such that every oriented graph $G$ on even $n\geqslant n_0$ vertices with $\sigma_{+-}(G)\geqslant(3n+2)/4$  contains an antidirected Hamilton cycle.
	\end{theorem}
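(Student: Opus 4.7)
The plan is to follow the absorbing method combined with a stability (extremal vs.\ non-extremal) dichotomy, paralleling the strategy used in Theorem~\ref{THM-changarXiv2025} for directed Hamilton cycles. I would fix parameters $0 < 1/n_0 \ll \eta \ll \varepsilon \ll \alpha \ll 1$ and classify vertices by $X^+ = \{v : d^+(v) < (3n/8) - \eta n\}$ and $X^- = \{v : d^-(v) < (3n/8) - \eta n\}$. The hypothesis $\sigma_{+-}(G) \geqslant (3n+2)/4$ immediately forces an almost-complete bipartite-like arc pattern between $X^+$ (resp.\ $X^-$) and its complement, so whenever $|X^+|+|X^-|$ is large the graph is $\varepsilon$-close to the tight extremal example achieving the bound $(3n+2)/4$, which I would first identify precisely in order to motivate both the arithmetic of the bound and the case split below.

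In the \emph{non-extremal} case the non-reservoir part of $G$ effectively satisfies a large minimum semidegree condition and I would execute the absorbing method in three stages. First, an absorbing lemma constructs an antidirected path $P_A$ of size $\Theta(\alpha n)$ such that for every $S\subseteq V(G)\setminus V(P_A)$ with $|S|\leqslant \alpha^2 n$ the union $V(P_A)\cup S$ spans an antidirected path with the same endpoints as $P_A$. The building blocks are ``absorbing gadgets'' for each vertex $v$: short antidirected fragments admitting two alternative antidirected completions, one inserting $v$ as a source and one inserting $v$ as a sink. Second, a connecting/reservoir argument, driven by the effective minimum semidegree together with the alternation-compatible expansion supplied by the Ore-type condition, covers $V(G)\setminus V(P_A)$ by a bounded number of long antidirected paths. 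Finally, a connecting lemma links these paths through $P_A$ and absorbs the leftover vertices to obtain the antidirected Hamilton cycle.

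In the \emph{extremal} case, $G$ is $\varepsilon$-close to the tight configuration identified above, which carries a partition $V(G)=V_1\cup V_2$ of prescribed sizes with a near-complete bipartite orientation. Here I would construct the antidirected Hamilton cycle directly: first prescribe the source/sink pattern forced by the bipartite orientation, then exploit the slack of $+2$ in the hypothesis to locally repair the $o(n)$ deviations from the ideal configuration. This is precisely the place where the bound $(3n+2)/4$, rather than $3n/4$, is used, and I expect the construction to split further into a handful of subcases according to the parities and sizes of $V_1, V_2$.

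The main obstacle is the absorbing lemma for \emph{antidirected} cycles. Unlike the directed case, an antidirected Hamilton cycle imposes a rigid alternation of sources and sinks, so each reservoir vertex must be absorbed in exactly one of two strict roles; yet under Ore-type (as opposed to semidegree) hypotheses a vertex may have very few out-neighbours or very few in-neighbours, forcing a specific role. Designing asymmetric gadgets that cover both vertex types while still concatenating into a single antidirected $P_A$ without breaking the alternation is the key technical step. A secondary difficulty is making the stability/extremal dichotomy sharp enough to close the gap at the exact threshold $(3n+2)/4$ without an $\varepsilon n$ error, which requires a careful structural analysis of graphs that narrowly escape the absorbing machinery.
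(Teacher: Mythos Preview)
Your high-level dichotomy is reasonable, but you have misidentified the extremal configuration, and this would derail the extremal case entirely. The tight examples (Proposition~\ref{PROP-degreesharp}) are not close to a bipartition $V_1\cup V_2$ with a near-complete bipartite orientation; they carry a \emph{four}-part structure $(A,B,C,D)$ with $|A|+|C|\approx n/2$ and $|B|,|D|\approx n/4$, where $A$ and $C$ each span a near-regular tournament and the cross arcs go $A\to B\to C\to D\to A$ (plus arcs between $B$ and $D$). The obstruction to an antidirected Hamilton cycle in these examples is a counting argument across the four parts, not a bipartite one. A stability analysis aimed at a two-part configuration will not recover this picture, and the ``$+2$'' slack you plan to exploit does not appear as a local bipartite repair: in the paper it is used precisely to guarantee two vertex-disjoint \emph{special arcs} in $E(A\cup D,C\cup D)\cup E(B\cup C,A\cup B)$ (Lemma~\ref{LEM:special-edges}), which are then threaded into the cycle.

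Separately, the paper takes a different route in the non-extremal case. Rather than building an absorbing lemma for antidirected cycles from scratch, it uses the robust-outexpander dichotomy: if $G$ is a robust $(\nu,\tau)$-outexpander, Taylor's theorem (Theorem~\ref{THM-expanderanyori}) already gives every orientation of a Hamilton cycle, the Ore condition supplying linear minimum semidegree via Proposition~\ref{PROP:Ore-implies-semideg}. All the real work is in the non-expander case, where one proves $G$ admits the four-part $\varepsilon$-nice partition above, bounds the number of bad vertices, covers $A$ and the bad vertices by a short antidirected path $P$ with good endpoints in $D$ that also contains the two special arcs, and finally closes up inside $C$ (which retains semidegree $\geqslant 4|C|/9$ and is itself a robust expander, so Theorem~\ref{THM-HPfixendv} applies). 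Your absorbing strategy is not obviously wrong, but the obstacle you yourself flag --- asymmetric gadgets compatible with the source/sink alternation under an Ore rather than semidegree hypothesis --- is genuine and is exactly what the expander black box sidesteps.
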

	
	The degree bound in Theorem \ref{THM-mainthm} is best possible, as shown by the following construction.
	
	\begin{proposition}\label{PROP-degreesharp}
		For any even integer $n \geqslant4$, there are infinitely many oriented graphs $G$ on $n$ vertices with $\sigma_{+-}(G)= \lceil (3n+2)/4 \rceil - 1$ that do not contain an antidirected Hamilton cycle.
	\end{proposition}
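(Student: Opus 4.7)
\textbf{Proof proposal for Proposition~\ref{PROP-degreesharp}.}

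Any antidirected Hamilton cycle on $n$ (even) vertices induces a balanced bipartition $V(G)=A\cup B$ with $|A|=|B|=n/2$, where $A$ is the set of \emph{sources} (vertices whose two cycle-neighbors are both out-neighbors) and $B$ is the set of \emph{sinks} (defined dually). Thus any oriented structure that forces every potential source into a prescribed vertex subset of size $\neq n/2$ rules out such a cycle. The plan is to build, for each even $n \geq 4$, an explicit extremal oriented graph exploiting this principle, and to produce infinitely many examples by varying a parameter inside the construction.

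The construction I would try is a perturbed blowup on four parts. Partition $V(G)=V_1\cup V_2\cup V_3\cup V_4$ with each $|V_i|$ within $1$ of $n/4$, but arranged so that $|V_1\cup V_3|\neq n/2$ (a $\pm 1$ or $\pm 2$ perturbation depending on $n\bmod 4$, reflecting the two cases $\lceil(3n+2)/4\rceil-1 = 3n/4$ or $(3n-2)/4$). All arcs between distinct parts are oriented according to a fixed 4-vertex oriented template $T$ designed so that the only way for a vertex to be a source in an antidirected Hamilton cycle is for it to lie in $V_1\cup V_3$; a natural first try is the bipartite template sending $V_1\cup V_3 \to V_2\cup V_4$, augmented by a few ``reverse'' arcs between parts (or a cyclic pattern $V_1\to V_2\to V_3\to V_4\to V_1$ with prescribed diagonals) in order to raise the $\sigma_{+-}$-value. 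Inside each $V_i$ I would place a near-regular sub-tournament, calibrated so that the total in- and out-degrees match. Infinitely many non-isomorphic examples then come from varying these internal tournaments (for $n$ large, many non-isomorphic near-regular tournaments on $\approx n/4$ vertices exist).

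Verification splits into two steps. (a) Enumerate non-arcs $xy$ by cases according to which parts contain $x$ and $y$ (same part, or distinct parts with $T$-arc going the wrong way), and compute $d^+(x)+d^-(y)$ in each case. By tuning the part-size perturbation and the internal tournament in/out-degrees, the minimum is made exactly $\lceil(3n+2)/4\rceil-1$; this pinpoints where the $+2$ offset in the numerator comes from. (b) Argue by contradiction that $G$ has no antidirected Hamilton cycle: from the chosen $T$, every source of such a cycle must have both cycle-neighbors in $V_2\cup V_4$, which forces the full source-set into $V_1\cup V_3$; but $|V_1\cup V_3|\neq n/2=|A|$, contradiction.

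The main obstacle is the tension between (a) and (b). Forcing the source/sink bipartition in (b) demands that most inter-part arcs run one way (say $V_1\cup V_3\to V_2\cup V_4$), which tends to leave vertices in $V_2\cup V_4$ with small out-degree and so collapses $\sigma_{+-}$ far below $(3n+2)/4$. Raising $\sigma_{+-}$ back to the threshold requires adding either reverse inter-part arcs or internal tournament arcs in $V_2\cup V_4$, each of which risks allowing a $V_2\cup V_4$-vertex to serve as a source in a would-be antidirected cycle (breaking (b)). Threading this needle — choosing $T$, the internal tournaments, and the $\pm 1$ size perturbations so that (a) holds exactly at threshold while (b) remains valid — is the key technical step. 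I would expect to resolve it by a short case analysis over a small family of candidate four-blob templates, verifying for each that the worst non-arc achieves precisely $\lceil(3n+2)/4\rceil-1$ while the forced source-set remains $V_1\cup V_3$.
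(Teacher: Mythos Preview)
Your four-part blowup instinct is correct and matches the paper's construction, but the obstruction argument you propose is the wrong one, and this is precisely the gap you yourself flag as ``the main obstacle'' without resolving. The pure source/sink bipartition count---forcing all sources into $V_1\cup V_3$ and deriving $|V_1\cup V_3|=n/2$---only works for the bipartite template $V_1\cup V_3\to V_2\cup V_4$, which as you note has $\sigma_{+-}$ far below threshold. The moment you switch to the cyclic template $V_1\to V_2\to V_3\to V_4\to V_1$ (which is what is needed to push $\sigma_{+-}$ up to $\approx 3n/4$), vertices of $V_2$ acquire out-neighbours in $V_3$ and can perfectly well serve as sources, so the bipartition count collapses. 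Your proposed fix of adding internal tournaments in all four parts makes this worse, not better.

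The paper threads the needle differently. It takes the cyclic template $A\to B\to C\to D\to A$ but places near-regular tournaments \emph{only inside $A$ and $C$}, leaving $B$ and $D$ independent (with, in one case, a balanced bipartite tournament between $B$ and $D$). The non-Hamiltonicity argument is then not a global bipartition count but a local \emph{propagation}: pick any vertex of $A$ on a putative antidirected Hamilton cycle and trace. A source in $A$ has both cycle-neighbours in $A\cup B$; a sink in $A\cup B$ has both cycle-neighbours in $A\cup D$; a source in $A\cup D$ has both cycle-neighbours in $A\cup B$; and so on around the cycle. The cycle is trapped in $A\cup B\cup D$ and never visits $C$, a contradiction. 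This tracing survives the cyclic arc pattern precisely because $B$ and $D$ carry no internal arcs. Moreover, the paper does not use near-balanced parts for all residues of $n$: for $n\equiv 0\pmod 4$ it takes $|A|=1$, and for $n\equiv 2\pmod 4$ it takes $|A|=0$, with $|C|\approx n/2$ and $|B|=|D|\approx n/4$; the $|A|=0$ case needs a separate counting argument (any antidirected Hamilton cycle would require $|C|\ge |B|+|D|+2$). So the resolution of your obstacle is to (i) drop the tournaments inside $V_2,V_4$, (ii) allow highly unbalanced parts depending on $n\bmod 4$, and (iii) replace the bipartition count by a step-by-step trace along the cycle.
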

	
	\paragraph*{Organization of the paper.} Section \ref{SEC: notation-sketch} introduces supplementary notation. Section \ref{SEC-pre} presents key technical tools and preliminary results. Section \ref{SEC: non-expander case} provides the proof of Theorem \ref{THM-mainthm}.  Section \ref{SEC: pf-of-PROP-degreesharp} discusses the sharpness of the degree condition through the proof of Proposition \ref{PROP-degreesharp}. Finally, Section \ref{SEC-remark} concludes this paper and proposes several open problems for future research.

	\section{Notation}\label{SEC: notation-sketch}
	
	In this section, we provide some basic notation and terminology. Notations not defined here, we refer to \cite{bang2009}. A {\bf digraph}  is not allowed to have parallel edges or loops and an {\bf oriented graph} is a digraph with no cycle of length 2. For two vertices $x,y$ in a digraph, we write $xy$ to denote the edge from $x$ to $y$. A path or cycle is called {\bf directed} if all of its arcs are oriented in the same direction, and {\bf antidirected} if it contains no directed path of length 2. Note that an antidirected cycle must have an even number of vertices. 
	
	For a digraph $G=(V,E)$, we use $V(G)$ and $E(G)$ to denote the set of vertices and the set of arcs of $G$, respectively. The {\bf order} of $G$, i.e., the number of vertices, is written as $|G|$, and the {\bf size} of $G$, i.e., the number of edges, is denoted by $e(G)$. The subdigraph induced by a vertex subset $X$ is written as $G[X]$, and we define $G-X = G[V\backslash X]$. For two subsets $A,B\subseteq V(G)$, let $E(A,B)$ denote the set of edges from $A$ to $B$ in $G$ and set $e(A,B) = |E(A,B)|$. If $B=A$, then we write $E(A)$ for $E(A,A)$ and $e(A)$ for $|E(A)|$.

	For a subdigraph $H\subseteq G$ and a vertex $v\in V(G)$, the {\bf out-neighborhood} of $v$ in $H$ is $N_G^+(v,H)=\{u\in V(H): vu\in E(G)\}$, and its {\bf out-degree} is $d_G^+(v,H) = |N_G^+(v,H)|$. The {\bf in-neighborhood} $N_G^-(v,H)$ and  {\bf in-degree} $d_G^-(v,H)$ are defined analogously. We write $d(v,H)=d^+(v,H)+d^-(v,H)$ for the degree of $v$ in $H$. The notation $d^{\pm}(v,H)\geqslant d$ means that both $d^+(v,H)$ and $d^-(v,H)$ are at least $d$. When the digraph $G$ is clear from the context, we usually omit the subscript and simply write $N^+(v,H), d^+(v,H)$, etc. When $H=G$, we abbreviate further to $N^+(v), d^+(v)$, and so on.

	Let $L$ be a path or cycle of an oriented graph $G$,  the {\bf length} of $L$ is the number of edges it contains. For a vertex $v\in V(L)$, it is a {\bf sink} vertex of $L$ if $d^+(v,L)=0$ and a {\bf source} vertex if $d^-(v,L)=0$.  Given subsets $X_1,X_2,\ldots,X_l\subseteq V(G)$, we say that $R= v_1v_2\cdots v_l$ has {\bf form} $X_1X_2\cdots X_l$ if $v_i\in X_i$ for all $i\in[l]$. For simplicity, we will say that $R$ has form $X^{l}$ if every vertex of $R$ belongs to $X$.

	Let $f,g: \mathbb{N}\longrightarrow \mathbb{R}^+$ be two functions. We write $f(n)=O(g(n))$ if there exists a constant $K>0$ such that $f(n) \leqslant Kg(n)$ for all sufficiently large $n$. The notation $\alpha \leqslant  \beta \leqslant  \gamma$ means that there exist increasing functions $f$ and $g$ such that $\beta \leqslant  f(\gamma)$ and $\alpha \leqslant  g(\beta)$.  For a positive integer $n$, $[n]$ denotes the set $\{1, 2, \ldots, n\}$.

	
	\section{Preparation}\label{SEC-pre}

	This section collects the definitions, tools, and preliminary results required for our proofs. We
	begin with a probabilistic partitioning lemma, followed by classical graph-theoretic facts, and
	then introduce the central concept of robust expansion along with its key properties. 
	The first lemma is a standard consequence of the Chernoff bound. It will be used to partition
	vertex sets while approximately preserving degree information.
	\begin{lemma}[\cite{debiasioEJC22}]\label{LEM-randompartition}
		For any $\varepsilon > 0$, there exists an integer $n_0$ such that the following holds. Suppose that $G$ is a digraph on $n \geqslant  n_0$ vertices. Let $S\subseteq V(G)$ and $|S|\geqslant m$, and define $K= m/|S|$. Then there exists a subset $T \subseteq S$ of order $m$ such that for every $v\in V(G)$,
		\begin{align*}|d^{\pm}(v,T) - Kd^{\pm}(v,S)| \leqslant   \varepsilon n \mbox{ and }|d^{\pm}(v,S\backslash T)- (1-K)d^{\pm}(v,S)| \leqslant   \varepsilon n.\end{align*}
	\end{lemma}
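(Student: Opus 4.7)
The plan is to obtain $T$ by random sampling from $S$ and apply Chernoff-type concentration. Specifically, I would draw $T$ uniformly at random among the $\binom{|S|}{m}$ subsets of $S$ of cardinality exactly $m$. For any fixed vertex $v \in V(G)$, the random variable $d^+(v,T) = |N^+(v) \cap T|$ counts how many of the $d^+(v,S)$ fixed elements of $N^+(v) \cap S$ land in the random set $T$, so it follows a hypergeometric distribution with mean $K\,d^+(v,S)$. The same holds for $d^-(v,T)$.

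Since hypergeometric sampling is at least as concentrated as the corresponding binomial, the standard Chernoff/Hoeffding inequality (or Serfling's bound for sampling without replacement) gives, for every vertex $v$,
$$\Pr\bigl[\,|d^+(v,T) - K\,d^+(v,S)| > \varepsilon n \,\bigr] \;\leqslant\; 2\exp(-c\varepsilon^2 n)$$
for an absolute constant $c>0$, and an analogous inequality for $d^-(v,T)$. Because $d^\pm(v,S\setminus T) = d^\pm(v,S) - d^\pm(v,T)$ and $(1-K) = |S\setminus T|/|S|$, the required bound on $|d^\pm(v,S\setminus T) - (1-K)d^\pm(v,S)|$ is literally the same inequality as the one on $|d^\pm(v,T) - K d^\pm(v,S)|$, so the complementary set costs no additional estimation.

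A union bound over the $2n$ bad events (in-degree and out-degree, one per vertex) yields a total failure probability of at most $4n\exp(-c\varepsilon^2 n)$, which is strictly less than $1$ once $n$ exceeds a threshold $n_0 = n_0(\varepsilon)$. Hence at least one subset $T$ of size $m$ meets all $4n$ inequalities simultaneously, which is exactly the statement of the lemma. The only place where one must take a small amount of care is in quoting a concentration inequality that is genuinely valid for hypergeometric variables rather than only for independent Bernoulli sums; this is why the authors label the result as an \emph{immediate} consequence of Chernoff, and no step of the argument is truly delicate.
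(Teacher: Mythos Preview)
Your proposal is correct and matches the paper's approach exactly: the paper does not prove this lemma but simply states that it ``is an immediate consequence of Chernoff Bound'' and cites \cite{debiasioEJC22}. Your sketch---uniform random $m$-subset, hypergeometric concentration for each $d^\pm(v,T)$, and a union bound over $v$---is precisely the standard argument behind that one-line justification.
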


	The following elementary proposition allows us to find a subgraph with high minimum degree. It is frequently used to pass from an average degree condition to a more manageable minimum degree condition.
	
	\begin{proposition}\label{PRO-findsubgraph}
		Every graph $G$ contains a subgraph $H$ with $\delta(H) \geqslant e(G)/|G|$.
	\end{proposition}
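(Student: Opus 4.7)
The plan is to use the classical greedy deletion argument, repeatedly removing low-degree vertices and showing the process cannot terminate in the empty graph. Let $d := e(G)/|G|$, so that $d$ is half the average degree of $G$. The aim is to find a subgraph $H$ with $\delta(H)\geqslant d$.

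I would set up the iteration as follows. Put $G_0 := G$ and, at each step $i\geqslant 0$, if there exists a vertex $v \in V(G_i)$ with $d_{G_i}(v) < d$, let $G_{i+1} := G_i - v$; otherwise stop and output $H := G_i$. Once the procedure stops, the remaining graph $H$ satisfies $\delta(H)\geqslant d$ by construction, so the only thing to check is that the procedure does not delete every vertex (so that $H$ is nonempty and is a legitimate subgraph).

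The core accounting step is to track how the ratio $e(G_i)/|G_i|$ evolves. When a vertex $v$ with $d_{G_i}(v) < d$ is removed, the number of edges decreases by $d_{G_i}(v) < d$, while the number of vertices decreases by exactly $1$. Hence
\begin{equation*}
e(G_{i+1}) = e(G_i) - d_{G_i}(v) > e(G_i) - d = e(G_i) - \frac{e(G)}{|G|},
\end{equation*}
and a straightforward induction gives $e(G_i) > e(G) - i\cdot e(G)/|G| = (|G|-i)\cdot e(G)/|G|$ for every $i$ for which the process has not yet stopped. If the procedure ever reached $i = |G|$, this inequality would give $e(G_{|G|}) > 0$, contradicting the fact that the empty graph has no edges. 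Therefore the process must halt at some $i < |G|$, and the resulting $H$ is a nonempty subgraph with $\delta(H)\geqslant e(G)/|G|$, as desired.

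There is essentially no obstacle here; this is a short, textbook argument. The only small point to be careful about is the strict inequality $d_{G_i}(v) < d$ in the deletion rule, which ensures that the telescoping estimate above is strict and thus rules out deleting all $|G|$ vertices. One could equivalently phrase the argument by choosing $H$ to maximize $e(H)/|H|$ among nonempty subgraphs and observing that if some vertex of $H$ had degree below $e(G)/|G| \leqslant e(H)/|H|$, removing it would strictly increase the ratio, contradicting maximality; I would include this alternative one-line proof only if space permits.
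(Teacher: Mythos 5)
Your proof is correct. The paper itself gives no proof of this proposition (it is stated as a classical fact), and your greedy-deletion argument is precisely the standard textbook proof: delete vertices of degree less than $d=e(G)/|G|$, note each deletion removes strictly fewer than $d$ edges and exactly one vertex, and conclude by the telescoping bound $e(G_i) > (|G|-i)\,e(G)/|G|$ that the process cannot exhaust all vertices, so it terminates with a nonempty subgraph of minimum degree at least $d$.
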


	The concept of a robust expander is fundamental to many recent results on Hamiltonicity. We recall its definition and basic properties, as it serves as the main structural condition in our absorbing arguments.	
	
	\begin{definition} \emph{(Robust outexpander)}
		Given $0<\nu\leqslant   \tau <1$. For an $n$-vertex  digraph $G$ and a subset $S\subseteq V(G)$, the $\nu$-out-neighborhood of $S$ in $G$, denoted by $RN_{\nu,G}^+(S)$, is the set of vertices that have at least $\nu n$ in-neighbors in $S$.  We say that $G$ is a robust $(\nu,\tau)$-outexpander if $$|RN_{\nu,G}^+(S)|\geqslant  |S|+\nu n$$ for every $S\subseteq V(G)$ with $\tau n< |S|< (1-\tau)n$.
	\end{definition}

	The next lemma, due to Kühn and Osthus \cite{kuhnAM237}, provides a crucial link between a semidegree condition and robust expander. It ensures that sufficiently dense oriented graphs automatically possess this expansive property.	

	\begin{lemma}[\cite{kuhnAM237}]\label{LEM-semitoexpander}
		Let $0<1/n\ll\nu\ll\tau\leqslant \varepsilon/2\leqslant  1$. Every $n$-vertex oriented graph $G$ with $\delta^0(G)\geqslant  (3/8+\varepsilon)n$ is a robust $(\nu,\tau)$-outexpander.
	\end{lemma}

	The power of robust expansion is captured in the following theorem by Taylor \cite{taylor2013}, which states that robust expanders with linear minimum semidegree are useful for all orientations of Hamilton cycles.
	
	\begin{theorem}[\cite{taylor2013}]\label{THM-expanderanyori}
		Let $1/n\ll \nu\leqslant   \tau\ll \gamma <1$. Suppose  $G$ is an $n$-vertex digraph with $\delta^0(G)\geqslant  \gamma n$. If $G$ is a robust $(\nu,\tau)$-outexpander, then $G$ contains every possible orientation of a Hamilton cycle.
	\end{theorem}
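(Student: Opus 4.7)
The plan is to prove this via the absorbing method of Rödl, Ruciński, and Szemerédi, tailored to the directed setting and to an arbitrary target orientation. Fix an arbitrary orientation $\vec{C}$ of a Hamilton cycle on $n$ vertices; each vertex of $\vec{C}$ is classified as a source (both incident arcs directed out), a sink (both directed in), or a transit point. I will build a Hamilton cycle of $G$ realizing $\vec{C}$ in three stages: first construct a short absorbing path $P_A$, then cover nearly all remaining vertices by oriented sub-paths whose orientations match consecutive segments of $\vec{C}$, and finally use $P_A$ to swallow a small leftover set.

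The technical engine is a \emph{connecting lemma} derived from robust outexpansion: for any orientation word $w\in\{+,-\}^{\ell}$ of bounded length, any ordered pair of vertices $(x,y)$, and any avoidance set $U$ with $|U|\leqslant \tau n$, there exist many short $x$--$y$ paths in $G-U$ whose arc-direction sequence is $w$. This is proved by iteratively applying the definition of $\nu$-robust outexpansion to the set of currently-reachable vertices in the direction dictated by the next letter of $w$, and using the semidegree hypothesis $\delta^0(G)\geqslant \gamma n$ to close off the final arc to $y$. A symmetric statement holds for the reverse orientation, so any bounded orientation pattern between any pair is realizable.

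Next, for each vertex $v\in V(G)$ and each possible local orientation pattern of $\vec{C}$ at $v$, I would use the connecting lemma to exhibit linearly many vertex-disjoint short \emph{absorber gadgets}: small subdigraphs with prescribed endpoints that admit an oriented $x$--$y$ path of the required type both with and without $v$ inserted in the interior. A standard probabilistic sampling, followed by pairwise splicing (again via the connecting lemma), glues a sub-linear number of these gadgets into a single absorbing path $P_A$ of length $o(n)$ with the property that any $W\subseteq V(G)\setminus V(P_A)$ of size at most some small linear quantity can be absorbed into $P_A$ while preserving the orientations dictated by $\vec{C}$. I then set aside a small random reservoir $R\subseteq V(G)\setminus V(P_A)$ and, in the remaining vertex set, greedily build a bounded number of long oriented paths that together cover everything except a tiny leftover and whose orientations follow consecutive arcs of $\vec{C}$. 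The connecting lemma, using vertices of $R$ as bridges between consecutive path segments and with $P_A$, splices everything into a single oriented spanning cycle missing only a small subset $W\subseteq R$; then $P_A$ absorbs $W$ to produce the desired Hamilton cycle with orientation $\vec{C}$.

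The main obstacle will be the uniform construction of absorber gadgets across all possible local orientation types at $v$. Because $\vec{C}$ is arbitrary, the number of distinct endpoint-orientation patterns needed is only a small constant, but for each pattern we require linearly many vertex-disjoint absorbers per vertex, and each gadget must stay short enough that $|V(P_A)|=o(n)$. This is precisely where the hypothesis $\delta^0(G)\geqslant \gamma n$ is indispensable: it guarantees that every vertex has linearly many admissible in- and out-neighbors of each type, after which robust outexpansion propagates these local candidates into full gadgets avoiding any small forbidden set. Balancing the densities of the absorbing, reservoir, and covering stages so that the connecting lemma applies with the prescribed avoidance set at every step is the technical heart of the argument, and is the reason the parameter hierarchy $1/n_0\ll \nu\leqslant \tau\ll \gamma$ is essentially tight.
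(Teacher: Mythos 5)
The paper does not prove this theorem itself; it cites it from Taylor's thesis, whose title (\emph{The regularity method for graphs and digraphs}) makes clear that the original proof proceeds via the digraph regularity lemma and a blow-up-style embedding argument on the cluster structure inherited by the reduced digraph. Your proposal instead follows the absorbing method of R\"odl, Ruci\'nski and Szemer\'edi, which is a genuinely different route. Both methods are standard for results of this type (indeed DeBiasio and Molla use absorption in their closely related antidirected Hamilton cycle paper), and your plan --- connecting lemma from robust outexpansion, absorber gadgets for each local orientation type, reservoir plus leftover absorption --- is sound at the level of detail sketched. The regularity approach has the advantage that, once the reduced digraph is in hand, embedding an arbitrary orientation pattern reduces to a clean cluster-level argument; the absorbing approach avoids the regularity overhead and is arguably more elementary, at the cost of having to engineer absorbers uniformly for source, sink and transit vertices.

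One point you should make explicit rather than gloss over: your connecting lemma ``iteratively applies the definition of $\nu$-robust outexpansion in the direction dictated by the next letter of $w$,'' but robust outexpansion only directly controls forward (out-)neighborhoods. When the next letter of $w$ calls for following an arc in reverse, you need the analogous expansion statement for $RN^-_{\nu}$, i.e.\ you need the reverse digraph $\overleftarrow{G}$ to be a robust outexpander. This is not immediate from the definition; it is a known consequence of robust outexpansion combined with the hypothesis $\delta^0(G)\geqslant \gamma n$ and $\tau\ll\gamma$ (it appears as a lemma in the K\"uhn--Osthus robust expander framework), but the argument is not a one-liner and should be cited or proved rather than absorbed into the phrase ``iteratively applying the definition.'' Without it, the connecting lemma --- and hence the entire absorber and covering machinery --- only handles consistently oriented (directed) words, not the alternating patterns needed for arbitrary orientations such as the antidirected one that is the point of the whole paper.
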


	A slight adaptation of the proof of Theorem \ref{THM-expanderanyori} yields a more flexible version for finding Hamilton paths with prescribed endpoints, a tool essential for connecting absorption structures. Its proof can be found in \cite{debiasioarXiv2025}.
	
	\begin{theorem}[\cite{debiasioarXiv2025}]\label{THM-HPfixendv}
		Let $1/n \ll \nu\leqslant   \tau\ll \gamma <1$. Suppose $G$ is an $n$-vertex  digraph  with $\delta^0(G)\geqslant  \gamma n$. If $G$ is a robust $(\nu,\tau)$-outexpander, then for any two distinct vertices $x,y\in V(G)$ and any oriented path $P$ on $n$ vertices, there is a copy of $P$ in $G$ that starts at $x$ and ends at $y$.
	\end{theorem}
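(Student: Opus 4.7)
The plan is to adapt the proof of Theorem~\ref{THM-expanderanyori}. Recall that that proof proceeds in three stages: (i) construct an absorbing oriented path $A\subseteq G$ with two designated endpoints, capable of absorbing an arbitrary small leftover vertex set while preserving the target orientation; (ii) use the directed regularity lemma together with the robust outexpansion of $G$ to cover most of $V(G)\setminus V(A)$ by a bounded collection of long oriented paths whose internal orientations agree with those of the target oriented Hamilton cycle, leaving aside a small reservoir $R$; (iii) use $R$ and the $\nu$-outexpansion of $G$ to stitch these pieces together with $A$ and then absorb the leftover vertices, yielding a spanning oriented cycle of the required orientation.

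For the present theorem the target is an oriented \emph{path} $P=p_1 p_2\cdots p_n$ with prescribed endpoints $p_1=x$ and $p_n=y$ rather than a closed cycle. Let $\varepsilon_x,\varepsilon_y\in\{+,-\}$ encode the orientations of the first and last arcs of $P$ (so, for instance, $\varepsilon_x=+$ iff $p_1 p_2$ is an arc of $P$). Observe that the internal orientations of $P$ are exactly those of any oriented cycle $C$ obtained from $P$ by inserting one extra arc between $x$ and $y$, so stage~(ii) can be invoked unchanged with target $C$. The real modifications occur only in stages~(i) and~(iii). In~(i), I would build the absorber $A$ so that its two endpoints are exactly $x$ and $y$, with the correct local orientations $\varepsilon_x,\varepsilon_y$: first construct a standard absorbing subpath $A_0\subseteq G-\{x,y\}$ with two free endpoints $a,b$ of the appropriate source/sink type using the usual probabilistic absorber construction; then, using that each of $x$ and $y$ has $d^+,d^- \geqslant \gamma n-1$ and that $G$ (after removing $V(A_0)$ and setting aside a small reservoir) remains a robust $(\nu',\tau')$-outexpander with linear minimum semidegree for suitable $\nu',\tau'$, apply the short-connecting lemmas already present in the proof of Theorem~\ref{THM-expanderanyori} to find internally disjoint oriented paths $Q_x$ from $x$ to $a$ and $Q_y$ from $b$ to $y$ through $R$ with the correct local orientations at $x,y$. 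Concatenating $Q_x, A_0, Q_y$ produces the anchored absorber $A$. In stage~(iii) I would leave the spanning structure open at $x,y$ rather than closing it.

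The main obstacle is precisely this anchored-absorber construction: compared to the free-endpoint version used inside the proof of Theorem~\ref{THM-expanderanyori}, one has two extra endpoint constraints to satisfy. Both are overcome by facts implicit in the original proof, namely that each of $x,y$ has linearly many in- and out-neighbors, and that in a robust $(\nu,\tau)$-outexpander with linear minimum semidegree one can prescribe both endpoints of a short oriented path of any prescribed local orientation through a designated reservoir. Once the anchored absorber is in place, the rest of the argument is a verbatim copy of the proof of Theorem~\ref{THM-expanderanyori}, with the final closure arc simply omitted. I expect the fiddliest point of a full write-up to be the bookkeeping needed so that $A$ still absorbs any small leftover vertex set without disturbing the prescribed orientations at $x$ and $y$.
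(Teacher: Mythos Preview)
Your proposal is correct and aligns with what the paper does: the paper gives no proof of this statement at all, merely remarking that ``a slight modification of the proof of Theorem~\ref{THM-expanderanyori} yields the following result'' and citing \cite{debiasioarXiv2025} for details. Your sketch of anchoring the absorber at $x$ and $y$ via short connecting paths and then omitting the final closure arc is exactly the kind of modification intended.
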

	
	Our main theorem assumes an Ore-type condition. The following simple proposition shows that this global condition implies a linear minimum semidegree, allowing us to connect it to the previous lemmas.

	\begin{proposition}\label{PROP:Ore-implies-semideg}
		Let $G$ be an $n$-vertex  oriented graph. If $\sigma_{+-}(G)\geqslant (n+3\gamma n)/2$, then $G$ has minimum semidegree $\delta^0(G)\geqslant \gamma n$. 
	\end{proposition}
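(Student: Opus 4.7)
The plan is to argue by contradiction via a short double-counting argument. Since $\sigma_{+-}$ is invariant under reversing all arcs of the digraph, it suffices to establish the out-degree bound $\delta^+(G) \geqslant \gamma n$; the in-degree bound $\delta^-(G) \geqslant \gamma n$ then follows by applying the same argument to the reverse of $G$. Suppose for contradiction that some vertex $v_0$ has $\delta := d^+(v_0) < \gamma n$, and consider the non-out-neighborhood $B := V(G) \setminus (\{v_0\} \cup N^+(v_0))$, which has $|B| = n - 1 - \delta$.

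The Ore-type hypothesis immediately gives, for every $y \in B$, that $d^-(y) \geqslant (n+3\gamma n)/2 - \delta$, since $v_0 y \notin E(G)$. Summing over $B$ lower-bounds the number of arcs whose head lies in $B$. I would then split these arcs according to whether the tail lies in $B$ or in $V(G)\setminus B = \{v_0\} \cup N^+(v_0)$: by the definition of $B$, the vertex $v_0$ contributes none, while the $\delta$ vertices of $N^+(v_0)$ together contribute at most $\delta|B|$ arcs into $B$. This forces
\[
e(B) \;\geqslant\; |B|\bigl((n+3\gamma n)/2 - 2\delta\bigr).
\]

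Finally, because $G$ is oriented, $e(B) \leqslant \binom{|B|}{2} = |B|(n-2-\delta)/2$. Matching this upper bound with the lower bound above, cancelling $|B|$, and simplifying yields $3\delta \geqslant 3\gamma n + 2$, so in particular $\delta > \gamma n$, contradicting the choice of $v_0$. The argument is essentially three lines and there is no substantive obstacle; the only subtle point worth noting is that the upper bound $e(B) \leqslant \binom{|B|}{2}$ is exactly where the oriented hypothesis enters, since for a general digraph that bound doubles and the resulting inequality no longer closes.
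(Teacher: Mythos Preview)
Your proof is correct and is essentially the same double-counting argument as the paper's, just carried out for $\delta^+$ rather than $\delta^-$: the paper takes $X=N^-(v)$, $Y=V(G)\setminus(X\cup\{v\})$ and bounds $e(Y,G)$ above by $|X||Y|+\binom{|Y|}{2}$ and below via the Ore condition, arriving at the same inequality $3\delta\geqslant 3\gamma n+2$ after simplification. The two arguments are mirror images under arc reversal.
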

	\begin{proof}
		By symmetry, it suffices to show that every vertex of $G$ has in-degree at least $\gamma n$ in $G$. 
		Suppose to the contrary that there exists a vertex $v\in V(G)$ with $d^-(v) <\gamma n$. Set $X=N^-(v)$ and $Y=V(G)\backslash (X\cup \{v\})$. For any vertex $y\in Y$, the Ore-type condition implies that $d^+(y) \geqslant (n +3\gamma n)/2 -d^-(v)\geqslant (n +3\gamma n)/2 -\gamma n=(n+\gamma n)/2$. Thus $e(Y,G)>|Y|(n+\gamma n)/2$.      
		On the other hand,  $e(Y,G)\leqslant |X||Y| + |Y|(|Y|-1)/2=|Y|(2|X|+|Y|-1)/2<|Y|(n+\gamma n-1)/2$, a contradiction.                
	\end{proof}
	
	Combining Proposition \ref{PROP:Ore-implies-semideg} with Lemma \ref{LEM-semitoexpander} and Theorem \ref{THM-expanderanyori} immediately gives the following corollary, which handles the ``expander case" of our main proof.

	\begin{lemma}\label{LEM:expander-case}
		Let  $0<1/n\ll \nu\leqslant   \tau\ll \gamma <1$ and let $G$ be an $n$-vertex  oriented graph. If $\sigma_{+-}(G)\geqslant (n+3\gamma n)/2$  and $G$ is a robust $(\nu,\tau)$-outexpander, then $G$ contains every possible orientation of a Hamilton cycle. In particular, $G$ has an antidirected Hamilton cycle when $n$ is even.    
	\end{lemma}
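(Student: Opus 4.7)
The plan is to deduce this lemma as a short combination of the two immediately preceding results in the paper. The first step is to translate the Ore-type hypothesis into a minimum semidegree condition. Proposition \ref{PROP:Ore-implies-semideg} does exactly this: from $\sigma_{+-}(G)\geqslant (n+3\gamma n)/2$ it follows directly that $\delta^0(G)\geqslant \gamma n$. So at this stage I would simply invoke that proposition; no further work is needed here beyond matching the constants.

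With $\delta^0(G)\geqslant \gamma n$ in hand, together with the assumption that $G$ is a robust $(\nu,\tau)$-outexpander and the hierarchy $1/n_0\ll \nu\leqslant \tau\ll \gamma<1$, all hypotheses of Theorem \ref{THM-expanderanyori} are met. Applying Taylor's theorem then yields that $G$ contains a copy of every possible orientation of a Hamilton cycle.

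For the ``in particular'' part, I would simply observe that an antidirected cycle must, by definition (as noted in the introduction), have even length. Hence when $n$ is even, an antidirected $n$-cycle is a legitimate orientation of the $n$-cycle, and the preceding step already produces a copy of it in $G$. There is essentially no serious obstacle in this argument: the lemma is really a bookkeeping statement that packages Proposition \ref{PROP:Ore-implies-semideg} and Theorem \ref{THM-expanderanyori} into one clean criterion, presumably so that later (in Section \ref{SEC: non-expander case}) one can immediately dispose of the case where $G$ is a robust outexpander and devote the main effort to the non-expander case of Theorem \ref{THM-mainthm}.
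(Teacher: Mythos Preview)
Your proposal is correct and matches the paper's own approach exactly: the paper states that this lemma ``is a direct consequence of Theorem~\ref{THM-expanderanyori} and Proposition~\ref{PROP:Ore-implies-semideg}'', which is precisely the two-step combination you describe.
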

	
	Finally, we state a useful discrepancy lemma. It translates bounds on the total number of edges between two sets into information about the individual degrees of most vertices, which is crucial for analyzing extremal cases.
	
	\begin{lemma}[\cite{wangarxiv2025}]\label{LEM:edges-to-degree} 
		Let $\varepsilon$ be a real with $\varepsilon \ll 1$. Suppose that  $G$ is a digraph and  $X,Y$ are two disjoint subsets of $V(G)$. 
		
		\emph{(i)} If $e(X,Y)\geqslant|X||Y|-O(\varepsilon n^2)$, then there are at most $\varepsilon^{1/3} n$ vertices $x$ in $X$ with $d^+(x,Y)\leqslant |Y|-\sqrt{\varepsilon}n$  and at most $\varepsilon^{1/3} n$  vertices $y$ in  $Y$ with $d^-(y,X)\leqslant |X|-\sqrt{\varepsilon}n$.
		
		\emph{(ii)} If $e(X,Y)\leqslant O(\varepsilon n^2)$, then there are at most $\varepsilon^{1/3} n$ vertices $x$ in $X$ with $d^+(x,Y)\geqslant\sqrt{\varepsilon}n$ and at most $\varepsilon^{1/3} n$  vertices $y$ in  $Y$ with $d^-(y,X)\geqslant\sqrt{\varepsilon}n$.
	\end{lemma}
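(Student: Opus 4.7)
The plan is to derive both parts from a single one-line double-counting argument: the hypothesis on $e(X,Y)$ caps either the total number of non-arcs or the total number of arcs between $X$ and $Y$, while every bad vertex contributes at least $\sqrt{\varepsilon}\,n$ to that count. Dividing one by the other yields the bound on the number of bad vertices.

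More precisely, for part (i) I would let $B \subseteq X$ denote the set of $x$ with $d^+(x,Y)\leqslant |Y|-\sqrt{\varepsilon}\,n$. Each $x\in B$ contributes at least $\sqrt{\varepsilon}\,n$ pairs $(x,y)$ with $y\in Y$ and $xy\notin E(G)$, so summing gives $|B|\sqrt{\varepsilon}\,n \leqslant |X||Y|-e(X,Y) \leqslant C\varepsilon n^{2}$, where $C$ is the implicit constant in the $O$-notation. Rearranging yields $|B|\leqslant C\sqrt{\varepsilon}\,n$. Since $\varepsilon\ll 1$, we have $C\varepsilon^{1/2}\leqslant \varepsilon^{1/3}$ (explicitly, as soon as $\varepsilon\leqslant C^{-6}$), and hence $|B|\leqslant \varepsilon^{1/3}n$. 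The analogous bound for $y\in Y$ with $d^-(y,X)\leqslant |X|-\sqrt{\varepsilon}\,n$ follows by applying exactly the same counting to in-degrees from $X$.

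For part (ii), I would symmetrically let $B'\subseteq X$ be the set of $x$ with $d^+(x,Y)\geqslant \sqrt{\varepsilon}\,n$. Then $|B'|\sqrt{\varepsilon}\,n \leqslant e(X,Y) \leqslant C\varepsilon n^{2}$, so $|B'|\leqslant C\sqrt{\varepsilon}\,n \leqslant \varepsilon^{1/3}n$, and the corresponding bound for $y\in Y$ with $d^-(y,X)\geqslant \sqrt{\varepsilon}\,n$ is again obtained by counting the same arcs from the other side.

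There is no real obstacle here; the statement is a pure averaging estimate. The only point worth flagging is the exponent: the counting directly produces the tighter bound $O(\sqrt{\varepsilon}\,n)$, and the slightly weaker $\varepsilon^{1/3}n$ is stated so that the implicit $O$-constant can be absorbed under the assumption $\varepsilon\ll 1$, which is presumably convenient when the lemma is applied later in the paper.
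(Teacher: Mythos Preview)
Your proposal is correct and is essentially the same averaging/double-counting argument as the paper's proof; the only cosmetic differences are that the paper phrases part (i) as a contradiction (assuming more than $\varepsilon^{1/3}n$ bad vertices and deriving $\varepsilon^{5/6}n^2$ missing arcs) and reduces part (ii) to part (i) by passing to the complement digraph, whereas you bound $|B|$ directly and handle (ii) by a separate but symmetric count. Your remark about the exponent is also on point.
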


	\section{Proof of Theorem \ref{THM-mainthm}}\label{SEC: non-expander case}
	
	Note that the proof of Theorem \ref{THM-mainthm} can be split into two cases, depending on whether the given oriented graph $G$ is a robust $(\nu,\tau)$-outexpander or not. If $G$ is a robust $(\nu,\tau)$-outexpander, then the result follows immediately from Lemma \ref{LEM:expander-case}. It remains to consider the case when $G$ is not a robust $(\nu,\tau)$-outexpander. This case is covered by the following theorem, whose proof will be given in the end of the section.
	
	\begin{theorem}\label{THM:non-expander-case}
		Let $0<1/n\ll\nu \leqslant   \tau \ll 1$. Suppose $G$ is an $n$-vertex oriented graph with $\sigma_{+-}(G)\geqslant (3n+2)/4$. If $G$ is not a robust $(\nu,\tau)$-outexpander, then it contains an antidirected Hamilton cycle.
	\end{theorem}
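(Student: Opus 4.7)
The plan is to exploit the failure of robust outexpansion to extract a rigid near-bipartite structure on $V(G)$ in which almost all arcs flow from one part $B$ to the other part $A$, and then build an antidirected Hamilton cycle that alternates sources in $B$ with sinks in $A$.

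\textbf{Step 1: extracting a near-bipartition.} Since $G$ is not a robust $(\nu,\tau)$-outexpander, fix a witness set $S \subseteq V(G)$ with $\tau n < |S| < (1-\tau) n$ and $|RN^+_{\nu,G}(S)| < |S| + \nu n$. Letting $T = V(G) \setminus RN^+_{\nu,G}(S)$, every vertex of $T$ has fewer than $\nu n$ in-neighbors in $S$, so $e(S, T) < \nu n^2$. Proposition \ref{PROP:Ore-implies-semideg}, applied with $\gamma$ slightly less than $1/6$, also gives $\delta^0(G) \geqslant \gamma n$. Combining these facts with $\sigma_{+-}(G) \geqslant (3n+2)/4$ and Lemma \ref{LEM:edges-to-degree}, I would derive a partition $V(G) = A \cup B$ (up to a small exceptional set $W$) with $|A|, |B|$ close to $n/2$, in which nearly all vertices of $A$ send essentially no arcs to $B$ and nearly all vertices of $B$ receive essentially no arcs from $A$. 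Set $A' = A \setminus W$ and $B' = B \setminus W$.

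\textbf{Step 2: pinning down the extremal pattern.} The natural antidirected structure compatible with this bipartition alternates sources in $B$ with sinks in $A$, so the target cycle needs $|A| = |B| = n/2$. I would derive this balance from the sharp bound $\sigma_{+-}(G) \geqslant (3n+2)/4$ via a careful degree-counting argument controlling all four arc populations $E(A), E(B), E(A,B), E(B,A)$ simultaneously: any imbalance would yield a pair $(x,y)$ with $xy \notin E(G)$ whose degree sum falls strictly below $(3n+2)/4$, contradicting the hypothesis. This is where the constant $(3n+2)/4$ is tight, matching the extremal constructions of Proposition \ref{PROP-degreesharp}. Once $|A| = |B| = n/2$ is established, the degree-sum condition applied to non-adjacent pairs $(a,b) \in A' \times B'$ with $ab \notin E(G)$ forces the bipartite arcs from $B$ to $A$ to be plentiful, so the balanced bipartite graph $H$ on $A' \cup B'$ with edge $\{a,b\}$ whenever $ba \in E(G)$ has linear minimum degree.

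\textbf{Step 3: assembling the antidirected Hamilton cycle.} I would first absorb each $w \in W$ into a short antidirected path of length two or three using $w$'s linear in- and out-degrees in $G$; this consumes only $O(|W|)$ vertices of $A' \cup B'$ and leaves a balanced bipartite remainder. Finding the desired antidirected Hamilton cycle of $G$ then reduces to finding a Hamilton path in $H$ with endpoints prescribed by the absorbing paths. Because the non-expansion bottleneck has been neutralised by the constraints of Step 2, a suitable auxiliary digraph built from $H$ (for instance, by orienting each bipartite edge in both directions and extracting an oriented subgraph with the required expansion and semidegree properties) satisfies the hypotheses of Theorem \ref{THM-HPfixendv}, which then delivers the Hamilton path with prescribed endpoints. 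Stitching this path with the absorbing paths completes the antidirected Hamilton cycle. The main obstacle will be Step 2: using only pairwise degree-sum information to prove $|A| = |B| = n/2$ exactly, while simultaneously pinning down the densities of all four arc populations tightly enough for the assembly of Step 3 to succeed. The precision demanded must match Proposition \ref{PROP-degreesharp} without any slack, which is why the constant $(3n+2)/4$ appears.
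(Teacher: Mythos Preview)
Your proposal has a genuine structural gap in Step 1. The failure of robust outexpansion does \emph{not} yield a bipartition of $V(G)$ into two parts of size roughly $n/2$ with few arcs in one direction. If $S$ is a witness set and $T=V(G)\setminus RN^+_{\nu,G}(S)$, then $S$ and $T$ typically overlap in a set $D$ of size about $n/4$, and also miss a set $B$ of size about $n/4$; this is exactly what the paper's Lemma \ref{LEM:non-expander-case} establishes. The correct picture is a \emph{four}-part ``nice partition'' $(A,B,C,D)$ with $|B|,|D|\approx n/4$ and $|A|+|C|\approx n/2$, modelled on a blow-up of a directed $4$-cycle in which $A$ and $C$ span near-regular tournaments. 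The extremal constructions in Proposition \ref{PROP-degreesharp} (in particular case (a), where all four parts have size $\approx n/4$) show that this structure cannot be collapsed to a two-part bipartition with the properties you claim: for any bipartition into halves, arcs run in both directions in linear density.

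Because of this, Steps 2 and 3 also break down. There is no reason to expect an exact balance $|A|=|B|=n/2$; the sharp constant $(3n+2)/4$ is not used to force balance but rather, as in the paper's Lemma \ref{LEM:special-edges}, to guarantee two vertex-disjoint \emph{special arcs} crossing the four-part structure in the ``wrong'' direction. These special arcs are essential parity-breaking ingredients without which the antidirected cycle cannot close (compare the extremal examples, which lack them). Finally, your plan to invoke Theorem \ref{THM-HPfixendv} on an auxiliary bipartite digraph is unjustified: a balanced bipartite graph of linear minimum degree need not be a robust outexpander. The paper instead locates its robust expander inside the single part $C$, which is close to a regular tournament and hence satisfies the semidegree hypothesis of Lemma \ref{LEM-semitoexpander}; this is where Theorem \ref{THM-HPfixendv} is legitimately applied to close the cycle.
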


	As mentioned before, to complete the proof of Theorem \ref{THM-mainthm}, it suffices to analyze the non-expander case. In this case, we first show that $G$ is structurally ``close'' to a well-behaved configuration. To formalize the notion of ``closeness'' to such a configuration, we introduce the following definition. Let $A,B,C,D$ be disjoint subsets of $V(G)$ with $V(G)=A\cup B\cup C\cup D$. For simplicity, we refer to $(A,B,C,D)$ as a partition of $G$.
	
	\begin{definition} (Nice partition)
		\label{DEF-nicep}
		Let $\varepsilon$ be a positive constant and let $G$ be an oriented graph on $n$ vertices. A partition $(A,B,C,D)$ of $V(G)$ is called an $\varepsilon$-\emph{nice partition} if the sets $A,B,C,D$ satisfy:
		\begin{enumerate}[label =\upshape \textbf{(NP\arabic{enumi})}, ref=\upshape (NP\arabic{enumi})]
			\setlength{\itemindent}{1.5em}
			\item $|A|\leqslant |C|$ and $|A| + |C| = n/2 \pm O(\varepsilon n)$;\label{NP1}
			\item $|B|, |D| = n/4 \pm O(\varepsilon n)$;\label{NP2}
			\item $e(A\cup D,C\cup D)\leqslant \varepsilon^2n^2$. \label{NP3}
		\end{enumerate}
	\end{definition}
	
	The following lemma shows that any sufficiently large oriented graph $G$ with high $\sigma_{+-}(G)$ value that fails to be a robust outexpander must admit a nice partition.
	
	\begin{lemma}\label{LEM:non-expander-case}
		Let $0<1/n\ll\nu\ll \tau,\varepsilon  \ll 1$. Suppose $G$ is an $n$-vertex oriented graph with $\sigma_{+-}(G)\geqslant  3n/4-O(\varepsilon n)$. If $G$ is not a robust $(\nu,\tau)$-outexpander, then $V(G)$ has an $\varepsilon$-nice partition $(A,B,C,D)$.
	\end{lemma}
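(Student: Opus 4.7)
The plan is to read the partition directly off the non-expansion witness $(S,T)$, and then use the Ore-type condition only to control sizes. Fix a non-expansion witness $S\subseteq V(G)$ with $\tau n\leqslant |S|\leqslant (1-\tau)n$ and $|T|\leqslant |S|+\nu n$, where $T:=RN^+_{\nu,G}(S)$. By definition of $T$, every $y\in V\setminus T$ satisfies $d^-(y,S)<\nu n$, and consequently $e(S,V\setminus T)<\nu n^2$. This is the engine of condition \ref{NP3}: the candidate partition
\[
A:=S\cap T,\qquad B:=T\setminus S,\qquad C:=V\setminus (S\cup T),\qquad D:=S\setminus T
\]
has $A\cup D=S$ and $C\cup D=V\setminus T$, so $e(A\cup D,C\cup D)=e(S,V\setminus T)<\nu n^2\leqslant \varepsilon^2 n^2$ (using $\nu\ll\varepsilon$), which is precisely \ref{NP3}. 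One then swaps the labels $A\leftrightarrow C$ at the end if necessary to ensure $|A|\leqslant |C|$.

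The remaining task is to verify the size conditions \ref{NP1} and \ref{NP2}, i.e.\ $|S|,|T|=n/2\pm O(\varepsilon n)$ and $|S\cap T|=n/4\pm O(\varepsilon n)$. Here the Ore-type hypothesis does the work. By Proposition~\ref{PROP:Ore-implies-semideg} with $\gamma=1/6$, one has $\delta^0(G)\geqslant n/6-O(\varepsilon n)$. For each $y\in V\setminus T$ the inequality $d^-(y)\leqslant \nu n+(n-|S|)$ holds; applied to any non-adjacent pair $(x,y)$ with $x\in S$ and $y\in V\setminus T$ (of which there are at least $|S||V\setminus T|-\nu n^2$), the Ore hypothesis forces $d^+(x)\geqslant |S|-n/4-O(\varepsilon n)$, and a Markov-type argument shows this bound holds for all but $O((\nu/\tau)n)$ vertices of $S$. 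Summing over such $x$ and comparing against the upper bound $\sum_{x\in S}d^+(x)\leqslant \binom{|S|}{2}+|S|\,|T\setminus S|+e(S,V\setminus T)$ produces a first quadratic inequality relating $|S|$ and $|S\cap T|$. Running the symmetric argument on the ``in'' side, i.e.\ applying Ore to non-adjacent pairs $(y,x)$ with $yx\notin E$, $y\in V\setminus T$ and $x\in T$, yields a companion inequality bounding the in-degrees into $T$. Combining these with the minimum-semidegree lower bound $d^-(y)\geqslant n/6-O(\varepsilon n)$ (which itself forces $|S|\leqslant 5n/6+\nu n$) then pins down $|S|\approx |T|\approx n/2$ and $|S\cap T|\approx n/4$.

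The main obstacle will be this size-control step. A one-sided application of Ore alone is compatible with $|S|$ anywhere in roughly $[n/6,5n/6]$, so reaching the tight value $n/2$ genuinely requires the in- and out-side arguments together with the sparse-edge structure $e(S,V\setminus T)<\nu n^2$ and the minimum semidegree acting simultaneously. If the direct counting turns out to be insufficient, the natural fallback is to modify the witness: repeatedly swap vertices between $S\setminus T$ and $T\setminus S$, or add vertices already lying in $T$ to $S$, which preserves the non-expansion up to lower-order error while shifting $|S|$ and $|T|$ toward $n/2$. Once the three sizes are controlled, conditions \ref{NP1} and \ref{NP2} follow immediately and the proof is complete.
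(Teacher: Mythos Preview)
Your initial setup is exactly right and matches the paper: define $A=S\cap T$, $B=T\setminus S$, $C=V\setminus(S\cup T)$, $D=S\setminus T$, so that \ref{NP3} is immediate. The problem is the size-control step, where you have misidentified what must be shown. You claim the goal is $|S|,|T|=n/2\pm O(\varepsilon n)$ and $|S\cap T|=n/4\pm O(\varepsilon n)$, but \ref{NP1}--\ref{NP2} only require $|B|=|T\setminus S|$ and $|D|=|S\setminus T|$ to be $n/4\pm O(\varepsilon n)$; the value of $|A|=|S\cap T|$ (and hence of $|S|$) is \emph{not} determined. Concretely, the extremal construction in Figure~\ref{FIG-degreesharp}(c) has $A=\emptyset$, $|B|=|D|\approx n/4$, $|C|\approx n/2$, and taking $S=D$ as the non-expansion witness gives $|S|\approx n/4$, not $n/2$. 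So the statement you set out to prove is false in general, and neither your counting argument nor your fallback of ``shifting $|S|$ toward $n/2$'' can succeed, because $n/2$ is simply not the correct value.

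Even within your counting, the first inequality you derive (essentially $|T\setminus S|\geqslant |S|/2-n/4-O(\varepsilon n)$) is vacuous when $|S|\approx n/4$, and your ``symmetric'' step is not actually symmetric: for $y\in V\setminus T$ there is no a priori bound on $d^+(y)$, so applying Ore to pairs $(y,x)$ with $x\in T$ does not yield a companion inequality of the same shape. The paper's argument avoids summation entirely and instead bounds $|D|$ and $|B|$ directly. For the upper bound on $|D|$ it picks two non-adjacent vertices $d_1,d_2\in D$ (which exist since $e(D)\leqslant\nu n^2$), applies Ore in both directions to get $d(d_1)+d(d_2)\geqslant 3n/2-O(\varepsilon n)$, and combines this with $d(d_i)\leqslant n-|D|+O(\sqrt{\nu}n)$. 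For the lower bound on $|B|$ it selects a vertex of small out-degree (via a minimum-out-degree vertex in $A$ or a typical vertex of $D$) and a vertex of small in-degree in $C$, checks they are non-adjacent, and applies Ore once. These pointwise applications are what pin down $|B|,|D|\approx n/4$ without ever controlling $|S|$ or $|S\cap T|$ individually.
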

	
	\begin{proof}
		Since $G$ is not a robust $(\nu,\tau)$-outexpander, there exists a subset $S\subseteq V(G)$ with $\tau n< |S|< (1-\tau)n$ such that $|RN_{\nu,G}^+(S)|<|S|+\nu n$. Set $A=RN_{\nu,G}^+(S)\cap S$, $B=RN_{\nu,G}^+(S)\backslash  S$, $C=V(G)\backslash  (RN_{\nu,G}^+(S)\cup S)$ and $D=S\backslash  RN_{\nu,G}^+(S)$. Then $(A,B,C,D)$ is a partition of $V(G)$ and 
		\begin{align}\label{EQ-BD}
			|B|<|D|+\nu n.
		\end{align}
		
		Without loss of generality, we may assume $|A|\le |C|$; otherwise, reverse all edges of $G$ and exchange the labels of $A$ and $C$ if necessary.
		
		By the definition of $RN_{\nu,G}^+(S)$, every vertex outside $RN_{\nu,G}^+(S)$ has fewer than $\nu n$ in-neighbors in $S$. Consequently, $e(A\cup D,C\cup D)\leqslant \nu n^2\leqslant \varepsilon^2n^2$ since $\nu\ll\varepsilon$. By Lemma \ref{LEM:edges-to-degree} (ii), there exist two subsets $A^L\subseteq A$, $C^L\subseteq C$ with  $|A^L|,|C^L|\leqslant \nu^{1/3}n$ such that $d^+(a,C\cup D)\leqslant \sqrt{\nu} n$ for each $a \in A\backslash A^L$ and  $d^-(c,A\cup D)\leqslant \sqrt{\nu} n$ for each  $c \in C\backslash C^L$. Furthermore, there exists a subset $D^L\subseteq D$ with $|D^L|\leqslant 2\nu ^{1/3}n$ such that $d^+(d,C\cup D)\leqslant \sqrt{\nu}n$ and $d^-(d,A\cup D)\leqslant \sqrt{\nu}n$ for each vertex $d\in D\backslash D^L$.

		To complete the proof, it remains to prove \ref{NP1} and \ref{NP2}. We begin by estimating the size of $D$.

		\begin{claim}\label{CLM-D}
			$\tau n/2<|D|\leqslant n/4+O(\varepsilon n)$.
		\end{claim}

		\begin{proof}
			We first prove the lower bound $|D| >\tau n/2$. Suppose to the contrary that $|D| \leqslant \tau n/2$. By inequality (\ref{EQ-BD}) and the fact that  $\nu \ll \tau$, we have $|B|\leqslant 2\tau n/3$. Moreover, it follows by $\tau n< |S|< (1-\tau)n$ that  $|A|= |S|-|D|\geqslant \tau n/2$ and $|C|=n-|S|-|B| \geqslant \tau n/3$. 
			
			Let $a\in A\backslash A^L$ be a vertex with minimum out-degree in $G[A\backslash A^L]$. Clearly, $d^+(a,A\backslash A^L)\leqslant |A\backslash A^L|/2$ as $G$ is an oriented graph. Then 
			$d^+(a)=d^+(a,A\backslash A^L)+d^+(a,A^L)+d^+(a,B)+d^+(a,C\cup D)\leqslant |A\backslash A^L|/2 +|A^L|+|B|+\sqrt{\nu}n\leqslant|A|/2+O(\tau n)$. Similarly, there is a vertex $c\in C\backslash(C^L\cup N^+(a,C))$ such that $d^-(c)\leqslant |C|/2+O(\tau n)$. Thus, $d^+(a)+d^-(c)\leqslant n/2+O(\tau n)$, which contradicts the Ore-type degree condition as $ac$ is not an edge of $G$ (since $c\in C\backslash(C^L\cup N^+(a,C))$). Therefore, $|D| >\tau n/2$.

			Next, we prove the upper bound $|D|\leqslant n/4+O(\varepsilon n)$. Recall that $D>\tau n/2$ and $|D^L|\leqslant 2\nu ^{1/3}n$. Then $|D\backslash D^L|\geqslant \tau n/3$. 
			Since $e(D)\leqslant e(A\cup D,C\cup D)\leqslant \nu n^2$, the induced subgraph $G[D\backslash D^L]$ is not a tournament. Hence, there are two vertices $d_1,d_2\in D\backslash D^L$ such that $d_1d_2\notin E(G)$ and $d_2d_1 \notin E(G)$. By the Ore-type degree condition, we get 
			$d(d_1)+d(d_2)\geqslant 3n/2-O(\varepsilon n)$.
			$d(d_1)+d(d_2)\leqslant 2(|A|+|B|+|C|+2\sqrt{\nu}n)=2(n-|D|+2\sqrt{\nu}n)$. Then the upper and lower bounds imply that $|D|\leqslant n/4 + O(\varepsilon n)$. This completes the proof of the claim.
		\end{proof}
		
		\begin{claim}\label{CLM-BD}
			$|B|= n/4\pm O(\varepsilon n)$.
		\end{claim}
		
		\begin{proof} By Claim \ref{CLM-D} and the inequality (\ref{EQ-BD}), we have $|B|\leqslant n/4+ O(\varepsilon n)$ and thus $|A|+|C|\geqslant n/2- O(\varepsilon n)$. Next we divide the proof into the following two cases based on the value of $|A|$.
			
			\medskip
			\noindent\textbf{Case 1: $|A| \leqslant \tau n/2$.}
			\medskip
			
			Choose a vertex $d \in D \backslash  D^L$ such that $d^+(d, C \cup D) \leqslant \sqrt{\nu}n$. Since $|C| \geqslant n/2 - O(\varepsilon n) - |A| \geqslant n/3$, we can choose a vertex $c \in C \backslash  (C^L \cup N^+(d,C))$ with minimum in-degree in $G[C \backslash  (C^L \cup N^+(d,C))]$. So $d^-(c, C) \leqslant |C|/2 + |C^L \cup N^+(d,C)|$. By the Ore-type degree condition,  we have 
			\begin{align*}
				3n/4 - O(\varepsilon n)&\leqslant d^+(d) + d^-(c)\\
				&\leqslant \left(|A| + |B| + \sqrt{\nu}n\right) + (|B| +|C|/2 + O(\nu^{1/3}n) +  \sqrt{\nu}n) \\
				&\leqslant (|A|+|B|+|C|+|D|)/2+ |A|/2+|B|+O(\nu^{1/3}n) \\
				&\leqslant n/2 + |B| + O(\varepsilon n).
			\end{align*}
			This implies that $|B| \geqslant n/4 - O(\varepsilon n)$ and thus  $|B| =n/4 \pm O(\varepsilon n)$.

			\medskip
			\noindent\textbf{Case 2: $|A| > \tau n/2$.}
			\medskip
			
			Choose a vertex $a \in A \backslash  A^L$ with minimum out-degree. Then $d^+(a, A) \leqslant |A|/2 + |A^L|$, and $d^+(a, C \cup D) \leqslant \sqrt{\nu}n$. Since $|C|\geqslant |A|> \tau n/2$, there exists a vertex $c \in C \backslash  (C^L\cup N^+(a,C))$ with minimum in-degree and thus $d^-(c, C) \leqslant |C|/2 + |C^L| + \sqrt{\nu}n$. By the Ore-type degree condition,  we have 
			\begin{align*}
				3n/4 - O(\varepsilon n)&\leqslant d^+(a) + d^-(c) \\
				&\leqslant (|A|/2+ |B| +  \sqrt{\nu}n) + (|B| + |C|/2 + O(\nu^{1/3}n)) \\
				&\leqslant (|A|+|B|+|C|+|D|)/2 +|B|+O(\nu^{1/3}n)\\
				&\leqslant n/2  + |B|+O(\varepsilon n).
			\end{align*}
			This implies that $|B|\geqslant n/4-O(\varepsilon n)$ and thus  $|B|= n/4\pm O(\varepsilon n)$.
		\end{proof}           
		
		By the above two claims and inequality (\ref{EQ-BD}),  we have  $n/4-O(\varepsilon n) \leqslant |B|-\nu n <|D|\leqslant n/4 +O(\varepsilon n)$. Moreover, $|A| + |C| = n - |B| - |D|= n/2 \pm O(\varepsilon n)$, which completes the proof.
	\end{proof}
	
	Having established the existence of a nice partition, we now introduce a classification of vertices based on their degree properties relative to this partition.
	
	\begin{definition} \emph{(Good/bad vertex)}\label{DEF-goodvtx}
		For a partition $(A,B,C,D)$ of an oriented graph $G$, a vertex $v$ is called a $\delta$-\textit{good} vertex of $G$ if it satisfies the following conditions, according to its part:		
		\begin{enumerate}[label =\upshape \textbf{(G\Alph*)},ref =\upshape (G\Alph*)]
			\setlength{\itemindent}{1.5em}
			\item if $v\in A$: $d^\pm(v,A)\geqslant |A|/2 - \delta n$, $d^+(v,B)\geqslant |B| -\delta n$ and $d^-(v,D) \geqslant |D| -\delta n$;\label{GA}
			\item if $v\in B$: $d^+(v,C)\geqslant |C| -\delta n $ and $ d^-(v,A) \geqslant |A| -\delta n$;\label{GB}
			\item if $v\in C$: $d^\pm(v,C)\geqslant |C|/2 - \delta n$, $d^+(v,D)\geqslant |D| -\delta n$ and $d^-(v,B) \geqslant |B| -\delta n$;\label{GC}
			\item if $v\in D$: $d^+(v,B)\geqslant |C|/2 - \delta n$, $d^-(v,B) \geqslant |A|/2 -\delta n$, $d^+(v,A)\geqslant |A| -\delta n $ and $ d^-(v,C) \geqslant |C| -\delta n$.\label{GD}
		\end{enumerate}
		Moreover, a vertex is called $\delta$-\textit{bad} if it is not $\delta$-good.
	\end{definition}
	
	The next lemma shows that for  an oriented graph $G$ with  $\sigma_{+-}(G) \geqslant 3n/4-O(\varepsilon n)$, the number of bad vertices in any nice partition of $G$ is necessarily small. Its proof is similar to the proof of Lemma \ref{LEM:non-expander-case}.
	
	\begin{lemma}\label{LEM-findbad}
		Given $\varepsilon>0$, there exists a constant $\delta \gg \varepsilon$ such that if an oriented graph $G$ satisfying $\sigma_{+-}(G) \geqslant 3n/4-O(\varepsilon n)$, then every $\varepsilon$-nice partition of $G$ contains at most $\delta n$ $\delta$-bad vertices.
	\end{lemma}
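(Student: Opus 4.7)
My plan is to mimic the three-part structure of the proof of Lemma \ref{LEM:non-expander-case}: first isolate a ``typical'' majority of vertices using \ref{NP3}, then apply the Ore-type condition to cross-part non-arcs to force the dense edge densities to be near-extremal, and finally convert edge density bounds into the local degree bounds of Definition \ref{DEF-goodvtx}. First I would apply Lemma \ref{LEM:edges-to-degree}(ii) to the four summands of $e(A\cup D, C\cup D)\leqslant\varepsilon^{2}n^{2}$ (namely $e(A,C)$, $e(A,D)$, $e(D,C)$, $e(D)$), obtaining sets of at most $O(\varepsilon^{2/3}n)$ exceptional vertices in each of $A$, $C$, and $D$; let $A_{0}, C_{0}, D_{0}$ denote the typical subsets. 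For typical $a\in A_{0}$ we then have $d^{+}(a)\leqslant d^{+}(a,A)+d^{+}(a,B)+\sqrt{\varepsilon}n$, with symmetric bounds for typical $c\in C_{0}$ and $d\in D_{0}$.

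The bound \ref{NP3} makes almost all pairs in $A_{0}\times C_{0}$, $A_{0}\times D_{0}$, and $D_{0}\times C_{0}$ non-arcs, so for each typical non-arc $xy$ of these three types the Ore-type condition gives $d^{+}(x)+d^{-}(y)\geqslant(3n+2)/4$. Substituting the typicality upper bounds and rearranging with $|A|+|C|=n/2\pm O(\varepsilon n)$ and $|B|=|D|=n/4\pm O(\varepsilon n)$ yields, for each pair type, a \emph{deficit inequality} stating that the total degree deficit of $x$ and $y$ across four chosen part-pairs is at most $n/4+O(\sqrt{\varepsilon}n)$. I then sum each such inequality over all typical non-arc pairs of its type and combine the three resulting bounds, using the oriented-graph constraints $e(A)\leqslant|A|(|A|-1)/2$ and $e(C)\leqslant|C|(|C|-1)/2$. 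The combined inequality forces, simultaneously up to additive error $O(\sqrt{\varepsilon}n^{2})$, the densities $e(A,B)\approx|A||B|$, $e(B,C)\approx|B||C|$, $e(D,A)\approx|D||A|$, $e(C,D)\approx|C||D|$, $e(A)\approx|A|(|A|-1)/2$, $e(C)\approx|C|(|C|-1)/2$, together with the non-extremal $B$--$D$ targets $e(B,D)\approx|A||D|/2$ and $e(D,B)\approx|C||D|/2$. Applying Lemma \ref{LEM:edges-to-degree}(i) to each of these dense edge sets then bounds by $O(\varepsilon^{1/6}n)$ the number of vertices in each part violating each relevant condition of \ref{GA}--\ref{GD}; taking $\delta$ with $\varepsilon\ll\delta\ll1$ and $\delta\gg\varepsilon^{1/6}$ completes the count of bad vertices.

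The main obstacle lies in Step 2: each individual Ore inequality (from $A$-$C$, $A$-$D$, or $D$-$C$ non-arcs) is already tight at the extremal partition, so no single one in isolation yields useful information on any of the eight edge densities. Disentangling them requires carefully weighting the three summed inequalities by the sizes of the opposite parts and exploiting the tiny but strictly positive slack in $e(A)\leqslant|A|(|A|-1)/2$ and $e(C)\leqslant|C|(|C|-1)/2$ to pin down all eight targets at once. A secondary subtlety is the degenerate regime $|A|\ll n$, where the $A$-$C$ and $A$-$D$ pair counts are too small to carry the argument; there the bounds involving vertices of $A$ become vacuous (few such vertices to misbehave at all), and the bounds involving the other three parts are recovered by instead using Ore on the abundant non-arc pairs within $D_{0}$ (abundant since $e(D)\leqslant\varepsilon^{2}n^{2}$) and on non-arcs within the near-tournament $C_{0}$.
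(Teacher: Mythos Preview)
There is a genuine gap at your final step. Your summed-Ore argument can indeed pin down the eight global edge counts (in fact the within-$D$ non-arcs alone already force $e(D,A)\approx|D||A|$, $e(C,D)\approx|C||D|$ and $e(D,B)+e(B,D)\approx|B||D|$, and the $A$--$C$ and $D$--$C$ sums then separate the rest). The problem is the conversion to vertex degrees. Lemma~\ref{LEM:edges-to-degree}(i) is stated only for a \emph{full}-density hypothesis $e(X,Y)\geqslant|X||Y|-O(\varepsilon n^{2})$; it does not apply to the half-density targets $e(D,B)\approx|D|\cdot|C|/2$, $e(B,D)\approx|D|\cdot|A|/2$, nor to the internal counts $e(A)\approx|A|(|A|-1)/2$ and $e(C)\approx|C|(|C|-1)/2$. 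And the edge counts by themselves are genuinely insufficient here: $e(A)\approx|A|^{2}/2$ says only that $G[A]$ is close to a tournament, and a transitive tournament has \emph{every} vertex failing $d^{\pm}(a,A)\geqslant|A|/2-\delta n$; likewise $e(D,B)\approx|D|\cdot|C|/2$ is compatible with half of $D$ having $d^{+}(d,B)=|B|$ and the other half $d^{+}(d,B)=0$. So your plan, as written, cannot certify the half-degree clauses of \ref{GA}, \ref{GC}, and \ref{GD}.

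The paper closes this gap by arguing \emph{pointwise} rather than on average. Having fixed a typical $d^{\ast\ast}\in D$ with $d^{+}(d^{\ast\ast},A)\approx|A|$ and $d^{-}(d^{\ast\ast},C)\approx|C|$, it then picks $a^{\ast\ast}\in N^{+}(d^{\ast\ast},A)$ of \emph{minimum} out-degree inside $A$ (so $d^{+}(a^{\ast\ast},A)\leqslant|A|/2+O(\varepsilon^{2/3}n)$) and applies Ore to the single non-arc $a^{\ast\ast}d^{\ast\ast}$; this forces $d^{-}(d^{\ast\ast},B)\geqslant|A|/2-O(\varepsilon^{2/3}n)$, and the symmetric argument via $C$ gives $d^{+}(d^{\ast\ast},B)\geqslant|C|/2-O(\varepsilon^{2/3}n)$. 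Since $d^{+}(d^{\ast\ast},B)+d^{-}(d^{\ast\ast},B)\leqslant|B|\approx|A|/2+|C|/2$, both are now two-sided bounds. Feeding the resulting upper bound $d^{-}(d^{\ast\ast},B)\leqslant|B|-|C|/2+O(\varepsilon^{2/3}n)$ back into Ore for a \emph{generic} non-arc $a^{\ast}d^{\ast\ast}$ then gives $d^{+}(a^{\ast},A)\geqslant|A|/2-O(\varepsilon^{2/3}n)$ for every typical $a^{\ast}$, and a counting argument using $e(A)\leqslant|A|^{2}/2$ supplies the matching in-degree bound. Your global averaging could substitute for some of the paper's edge-density steps, but you still need this extremal-choice-plus-pointwise-Ore cascade to recover the half-degree conditions.
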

	
	\begin{proof}
		Let $(A,B,C,D)$ be an $\varepsilon$-nice partition of $V(G)$.  Applying Lemma \ref{LEM:edges-to-degree} (ii) to $X=A\cup D$ and $Y=C\cup D$, there are three sets $A^L\subseteq A, D^L\subseteq D$ and $C^L\subseteq C$ satisfying
		
		\begin{enumerate}[label =\upshape \textbf{(Y\arabic{enumi})},ref=\upshape (Y\arabic{enumi})]
			\setlength{\itemindent}{1.5em}
			\item $|A^L|,|D^L|,|C^L|\leqslant 2 \varepsilon^{2/3} n$; \label{Y1}
			\item $d^+(u,C\cup D) \leqslant \varepsilon n$ for each $u\in (A\backslash A^L)\cup (D\backslash D^L)$; \label{Y2}
			\item $d^-(v,A\cup D) \leqslant  \varepsilon  n$ for each $v\in (C\backslash C^L)\cup (D\backslash D^L)$. \label{Y3}
		\end{enumerate}

		Next, we finish the proof by proving the following two claims and some additional arguments.
		\begin{claim}
			There are at most $\delta n/4$ $\delta$-bad vertices in $D$.
		\end{claim}
		\begin{proof}
			Let $d^{\ast}$ be an arbitrary vertex in $D\backslash D^L$. By \ref{Y2} and \ref{Y3}, the vertex $d^{\ast}$ has at most $2\varepsilon n$ neighbors in $D$. 
			Then there is a vertex $d^{\ast\ast}\in D\backslash (D^L\cup\{d^{\ast}\})$ that is not adjacent to $d^{\ast}$. 
			By \ref{Y1}, there are at least $|D| - 3\varepsilon^{2/3}n$  choices for $d^{\ast\ast}$. 
			Since $d^{\ast}d^{\ast\ast}\notin E(G)$ and $d^{\ast\ast}d^{\ast}\notin E(G)$, the Ore-type condition implies that 
			$d(d^{\ast})+d(d^{\ast\ast})\geqslant 3n/2-O(\varepsilon n).$
			On the other hand, since $d^{\ast}, d^{\ast\ast} \notin D^L$, \ref{Y2}-\ref{Y3} yield
			$d(d^{\ast})+d(d^{\ast\ast})\leqslant 4\varepsilon n + d^-(d^{\ast},C)+ d^-(d^{\ast\ast},C)+2|B|+d^+(d^{\ast},A)+d^+(d^{\ast\ast},A).$
			Recall that $|B|=n/4\pm O(\varepsilon n)$ and $|A|+|C|=n/2\pm O(\varepsilon n)$.
			Comparing the upper and lower bounds of $d(d^{\ast})+d(d^{\ast\ast})$, we obtain
			$d^-(d^{\ast},C)+ d^-(d^{\ast\ast},C)+d^+(d^{\ast},A)+d^+(d^{\ast\ast},A)\geqslant 2(|A|+|C|)-O(\varepsilon n).$
			In particular, this implies
			\begin{equation}\label{EQ-d1}
				d^-(d^{\ast\ast},C)\geqslant |C|-O(\varepsilon n) \mbox{ and } d^+(d^{\ast\ast},A)\geqslant |A|-O(\varepsilon n).
			\end{equation}

			Let $a^{\ast}$ be any vertex of $N^+(d^{\ast\ast},A)\backslash A^L$. Since $a^{\ast}\notin A^L$ and $d^{\ast\ast}\notin D^L$, we have 
			\begin{equation}\label{EQ-d}
				d^+(a^{\ast})+d^-(d^{\ast\ast})\leqslant 2\varepsilon n+ d^+(a^{\ast},A)+d^+(a^{\ast},B)+d^-(d^{\ast\ast},B)+d^-(d^{\ast\ast},C).
			\end{equation}
			
			Next, we claim that $d^{\ast\ast}$ has at least $|A|/2-5\varepsilon^{2/3}n$ in-neighbors in $B$.
			If $|A|/2<5\varepsilon^{2/3}n$, the claim holds trivially. Thus, we can assume that $|A|$ (and consequently $|C|$) is large. In particular,  (\ref{EQ-d1}) and \ref{Y1} show that $|N^+(d^{\ast\ast},A)\backslash A^L|$ is large. Let $a^{\ast\ast}$ be a vertex in $N^+(d^{\ast\ast},A)\backslash A^L$ with minimum out-degree in  $G[N^+(d^{\ast\ast},A)\backslash A^L]$. Since $G$ is oriented, we have $d^+(a^{\ast\ast},N^+(d^{\ast\ast},A)\backslash A^L)\leqslant |N^+(d^{\ast\ast},A)\backslash A^L|/2$. By (\ref{EQ-d1}) and  $|A^L|\leqslant 2\varepsilon^{2/3} n$, we have $|A|\leqslant |N^+(d^{\ast\ast},A)\backslash A^L|+3\varepsilon^{2/3} n$ and hence $d^+(a^{\ast\ast},A)\leqslant |A|/2+3\varepsilon^{2/3} n$. Then (\ref{EQ-d}) implies that $d^+(a^{\ast\ast})+d^-(d^{\ast\ast})\leqslant 2\varepsilon n+ |A|/2+3\varepsilon^{2/3} n+|B|+d^-(d^{\ast\ast},B)+|C|.$ 
			
			On the other hand, since $a^{\ast\ast} \in N^+(d^{\ast\ast}, A)$ and $G$ is oriented, $a^{\ast\ast}d^{\ast\ast}\notin E(G)$.
			Thus, the Ore-type condition gives that $d^+(a^{\ast\ast})+d^-(d^{\ast\ast})\geqslant 3n/4-O(\varepsilon n).$  
			Combining the upper and lower bounds of $d^+(a^{\ast\ast})+d^-(d^{\ast\ast})$, we have 
			\begin{equation}\label{EQ-d2}
				d^-(d^{\ast\ast},B)\geqslant |A|/2-5\varepsilon^{2/3}n.
			\end{equation}
			
			By symmetry, we also obtain the following inequality:  
			\begin{equation}\label{EQ-d3}
				d^+(d^{\ast\ast}, B) \geqslant |C|/2 - 5\varepsilon^{2/3}n.
			\end{equation}
			To see this, let $c^{\ast\ast}$ be a vertex in $N^-(d^{\ast\ast}, C) \backslash  C^L$ with minimum in-degree in $G[N^-(d^{\ast\ast}, C) \backslash  C^L]$. A similar argument as above shows that 
			$d^-(c^{\ast\ast}, C) \leqslant |C|/2 + 3\varepsilon^{2/3} n.$ Since $d^{\ast\ast}c^{\ast\ast} \notin E(G)$, the Ore-type condition together with \ref{Y2}-\ref{Y3} implies  
			$2\varepsilon n + |C|/2 + 3\varepsilon^{2/3} n + |B| + |A| + d^+(d^{\ast\ast}, B) \geqslant 3n/4 - O(\varepsilon n),$ from which (\ref{EQ-d3}) follows.
			
			By inequalities (\ref{EQ-d1}), (\ref{EQ-d2}), (\ref{EQ-d3}) and the fact that $\varepsilon\ll \delta$, we get that  $d^{\ast\ast}$ is $\delta$-good. Since there are at least $|D|-3\varepsilon^{2/3}n$ choices for $d^{\ast\ast}$, we conclude that $D$ has at most $\delta n/4$ $\delta$-bad vertices.
		\end{proof} 
		
		\begin{claim}
			There are at most $\delta n/4$ $\delta$-bad vertices in $A$ and $e(A,B)\geqslant |A||B|-O(\varepsilon^{1/6}n^2)$.
		\end{claim}
		
		\begin{proof}
			If $|A|\leqslant \delta n/4$, the claim holds trivially. Thus, we can assume $|A|\geqslant \delta n/4$, which implies $|C|\geqslant \delta n/4$ as well. First, observe that by (\ref{EQ-d3}) and the fact that $G$ is oriented, we have $d^-(d^{\ast\ast},B)\leqslant |B|-|C|/2+5\varepsilon^{2/3}n$. Substituting this into (\ref{EQ-d}) yields 
			$d^+(a^{\ast})+d^-(d^{\ast\ast})\leqslant 2\varepsilon n+ d^+(a^{\ast},A)+2|B|-|C|/2+5\varepsilon^{2/3}n+|C|.$  
			Since there is no arc from $a^{\ast}$ to $d^{\ast\ast}$, 
			the Ore-type condition gives that 
			$d^+(a^{\ast})+d^-(d^{\ast\ast})\geqslant 3n/4-O(\varepsilon n).$  Combining the upper and lower bounds and the fact $|A| + |C| = n/2 \pm O(\varepsilon n) $ and $|B|, |D| = n/4 \pm O(\varepsilon n)$, we have 
			\begin{equation}\label{EQ-a1}
				d^+(a^{\ast},A)\geqslant |A|/2-6\varepsilon^{2/3}n \mbox{ and thus } d^-(a^{\ast},A)\leqslant |A|/2+6\varepsilon^{2/3}n.
			\end{equation}
			
			Recall that $a^{\ast}$ is an arbitrary vertex in $N^+(d^{\ast\ast},A)\backslash A^L$ and $|N^+(d^{\ast\ast},A)\backslash A^L|\geqslant |A|-O(\varepsilon^{2/3}n)$ by (\ref{EQ-d1}) and \ref{Y1}. Thus there are at most $O(\varepsilon^{2/3}n)$ vertices in $A$ having out-degree less than $|A|/2-6\varepsilon^{2/3}n$ in $A$. Moreover, it follows that $e(A)\geqslant |A|^2/2-O(\varepsilon^{2/3}n^2)$.
			
			Next, we claim that there are at most $\varepsilon^{1/3}n$   vertices in $A$ having in-degree less than $|A|/2-\varepsilon^{1/4}n$ in $A$. Suppose to the contrary that there are more than $\varepsilon^{1/3}n$ such vertices. Then the second statement of inequality (\ref{EQ-a1}) implies that 
			\begin{align*}
				e(A)&\leqslant \varepsilon^{1/3}n(|A|/2-\varepsilon^{1/4}n)+ |A^L||A|+(|A\backslash A^L|-\varepsilon^{1/3}n)(|A|/2+6\varepsilon^{2/3}n)\\
				&\leqslant |A|^2/2-\varepsilon^{7/12}n^2+8\varepsilon^{2/3}n^2,
			\end{align*}
			which contradicts the fact that $e(A)\geqslant |A|^2/2-O(\varepsilon^{2/3}n^2)$.  Therefore, there are at most $2\varepsilon^{1/3}n$ vertices in $A$ have either out-degree or in-degree less than $|A|/2-\varepsilon^{1/4}n$ in $G[A]$. Let $A_1$ be the set of these vertices. Clearly, $|A_1|\leqslant 2\varepsilon^{1/3}n$.  Observe that since $G$ is oriented, for each $a\in A\backslash A_1$, we have  $d^-(a, A), d^+(a, A)= |A|/2\pm \varepsilon^{1/4}n$.  
			
			Recall that there are at least $|D|-3\varepsilon^{2/3}n$ choices for the vertex $d^{\ast\ast}$. Then by  (\ref{EQ-d1}), we get $e(D,A)\geqslant |A||D|-O(\varepsilon^{2/3}n^2)$. Due to Lemma \ref{LEM:edges-to-degree} (i), there are at most $\varepsilon^{1/6} n$ vertices of $A$  have in-degree less than  $|D|-\varepsilon^{1/3}n$ in $D$. Let $A_2$ be the set of those vertices with small in-degree. Clearly, $|A_2|\leqslant\varepsilon^{1/6} n$. Let $a$ be any vertex of $N^+(d^{\ast\ast},A)\backslash (A^L\cup A_1\cup A_2)$. Recall that $d^-(d^{\ast\ast},B)\leqslant |B|-|C|/2+5\varepsilon^{2/3}n$  by inequality (\ref{EQ-d3}). Since $a\notin A_1$, we have $d^+(a,A)=|A|/2\pm \varepsilon^{1/4}n$. Then inequality (\ref{EQ-d}) implies that $d^+(a)+d^-(d^{\ast\ast})\leqslant 2\varepsilon n+ |A|/2+\varepsilon^{1/4}n+d^+(a,B)+|B|-|C|/2+5\varepsilon^{2/3}n+|C|.$ 
			On the other hand, the Ore-type condition implies $d^+(a)+d^-(d^{\ast\ast})\geqslant 3n/4-O(\varepsilon n).$ 
			Comparing these bounds, we obtain $d^+(a,B)\geqslant |B|-O(\varepsilon^{1/4}n)$. Hence, $a$ is a $\delta$-good vertex as $\varepsilon\ll \delta$ and  $a\notin A_1\cup A_2$. 
			
			Now we count the number of such good vertices. It follows by   (\ref{EQ-d1}) that $|N^+(d^{\ast\ast},A)|\geqslant|A|-O(\varepsilon n)$. Moreover, since $|A_1|\leqslant 2\varepsilon^{1/3}n$,  $|A_2|\leqslant\varepsilon^{1/6} n$ and $|A^L|= 2\varepsilon^{2/3}n$,  there are at least $|A|-2\varepsilon^{1/6} n$  choices for $a$. This means that $A$ has at most $2\varepsilon^{1/6} n<\delta n/4$ $\delta$-bad vertices as $\varepsilon \ll \delta$. Moreover, we have 
			$e(A,B)\geqslant (|A|-2\varepsilon^{1/6} n)(|B|-O(\varepsilon^{1/4}n))\geqslant |A||B|-O(\varepsilon^{1/6}n^2).$
		\end{proof}
		
		By symmetry, we can obtain that $C$ has at most $\delta n/4$ $\delta$-bad vertices and $e(B,C)\geqslant |B||C|-O(\varepsilon^{1/6}n^2)$. This is achieved by swapping the roles of $A$ and $C$, $B$ and $D$, and  interchanging ``$+$'' and ``$-$'' in the proof of the preceding claim. Applying Lemma \ref{LEM:edges-to-degree} (i) to $e(A,B)$ and $e(B,C)$ respectively, we obtain that $B$ has at most $\delta n/8$ vertices with in-degree less than $|A|-\delta n$ in $A$ and at most $\delta n/8$ vertices with out-degree less than $|C|-\delta n$ in $C$. It follows that $B$ has at most $\delta n/4$ bad vertices in total.  Therefore, the partition $(A,B,C,D)$ contains at most $\delta n$ $\delta$-bad vertices, which completes the proof of Lemma \ref{LEM-findbad}.
	\end{proof}

	Let $(A,B,C,D)$ be a partition of $G$. An edge $e$ is called \emph{special} for  $(A,B,C,D)$ if it belongs to $E(A\cup D,C\cup D)\cup E(B\cup C,A\cup B)$. Special edges are useful for embedding a long antidirected cycle in $G$. The following lemma shows that the degree condition $\sigma_{+-}(G) \geqslant (3n+2)/4$ guarantees the existence of two vertex-disjoint special edges for the nice partition.

	\begin{lemma}\label{LEM:special-edges}
		Let $0<1/n\ll\delta  \ll 1$, and let $G$ be an $n$-vertex oriented graph with a $\delta$-nice partition $(A,B,C,D)$. If $\sigma_{+-}(G)\geqslant  (3n+2)/4$, then there are at least two vertex-disjoint special edges for the partition.
	\end{lemma}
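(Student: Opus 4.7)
The plan is to proceed by contradiction. Assume no two vertex-disjoint special arcs exist. Then, viewed as an undirected edge set, the special arcs form either (i) a star centered at some single vertex $v^\ast$, or (ii) a triangle on three vertices. The triangle case involves at most three special arcs and can be resolved by a short direct analysis; the focus is on the star case, which I split according to which part of the partition contains $v^\ast$.

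\textbf{Case $v^\ast \in B \cup D$:} Every arc $a \to c$ with $a \in A, c \in C$ lies in $E(A\cup D, C \cup D)$ and is thus special, but it cannot be incident to $v^\ast \notin A \cup C$. Hence $e(A,C) = 0$, and symmetrically $e(C,A) = 0$. Every ordered pair $(a,c) \in A \times C$ therefore satisfies $ac \notin E$, so the Ore-type inequality applies. Summing over all $|A||C|$ such pairs and bounding $\sum_{a\in A} d^+(a) \le \binom{|A|}{2} + |A||B| + e(A, C\cup D)$ using \ref{NP3} (and symmetrically for $\sum_{c\in C} d^-(c)$), the key bracket $(|A|+|C|-2)/2 + 2|B| - (3n+2)/4$ evaluates to $-3/2 + O(\varepsilon n)$ by the nice-partition identities \ref{NP1}--\ref{NP2}. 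Rearranging gives $e(A,C) \ge 2|A||C|/n - O(\varepsilon n)$, a strictly positive quantity, contradicting $e(A,C) = 0$.

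\textbf{Case $v^\ast \in A \cup C$:} The arcs $B \to B$ and $D \to D$ are special yet cannot be incident to $v^\ast \notin B \cup D$, so $e(B,B) = e(D,D) = 0$. Every ordered pair inside $B$ (and inside $D$) is bi-directionally non-adjacent, so Ore applies to each, yielding
\[
\sum_{v \in B \cup D} d(v) \;\ge\; (|B|+|D|)\cdot \frac{3n+2}{4}.
\]
On the other hand, the structural restriction (no internal arcs in $B$ or $D$, at most $|B||D|$ arcs between $B$ and $D$, and at most $(|B|+|D|)(|A|+|C|)$ arcs between $B \cup D$ and $A \cup C$) gives
\[
\sum_{v \in B \cup D} d(v) \;\le\; 2|B||D| + (|B|+|D|)(|A|+|C|).
\]
Combining these using AM--GM ($|B||D| \le (|B|+|D|)^2/4$), the identity $|A|+|B|+|C|+|D| = n$, and the $\pm O(\varepsilon n)$ slack from \ref{NP2}, the two bounds collide: the "+2" in $(3n+2)/4$ is exactly the margin needed to turn what would otherwise be a tight equality (under the weaker threshold $3n/4$, when $|B|=|D|=n/4$) into a strict contradiction.

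\textbf{Main obstacle.} The hardest part is the marginal sub-case of $v^\ast \in A \cup C$ where $|B|+|D|$ may fall slightly below $n/2$, so that the AM--GM step is nearly tight and does not immediately conflict with the Ore lower bound. To close this sub-case, I expect to use a parallel Ore sum over pairs within $A$ or $C$, together with the ordering $|A| \le |C|$ from \ref{NP1} (possibly after a reversal of arcs interchanging the roles of $A,C$ and $B,D$). Throughout, the payoff of the sharp threshold $(3n+2)/4$ rather than $3n/4$ is indispensable, consistent with the sharpness established in Proposition~\ref{PROP-degreesharp}.
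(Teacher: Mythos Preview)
Your overall case split (triangle versus star) matches the paper's, but the execution has a genuine gap that affects both star sub-cases.

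\textbf{The core problem.} In your sub-case $v^\ast\in B\cup D$, after summing the Ore inequalities over $A\times C$ you arrive at the bracket $(|A|+|C|-2)/2+2|B|-(3n+2)/4=-3/2+O(\delta n)$. But the $O(\delta n)$ slack from \ref{NP1}--\ref{NP2} is enormous compared with the constant $3/2$: for large $n$ the bracket can easily be positive, so no contradiction follows. Your claimed rearrangement ``$e(A,C)\geqslant 2|A||C|/n-O(\varepsilon n)$'' is not what the computation gives; the $O(\delta n)$ in the bracket gets multiplied by $|A||C|$ and ends up as an $O(\delta n^2)$ term, which swamps the positive part. In short, averaging arguments that rely on the approximate sizes in the nice partition cannot detect the exact ``$+2$'' in the threshold. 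The same issue lurks in your sub-case $v^\ast\in A\cup C$: you correctly deduce $|B|+|D|\leqslant n/2-1$, but this alone is consistent with \ref{NP2}, and your proposed remedy --- an Ore sum over pairs inside $A$ or $C$ --- does not apply, since arcs within $A$ and within $C$ are \emph{not} special, so those pairs need not be non-adjacent.

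\textbf{What the paper does instead.} Rather than splitting by the location of $v^\ast$, the paper removes $v^\ast$ (setting $X=\{v^\ast\}$ or $X=\emptyset$) and then picks a single vertex from each of the updated parts. The key observation is that after deleting $v^\ast$ the four arcs $ad,\,dc,\,cb,\,ba$ are all absent (each would be special), so summing the four Ore inequalities gives $d(a)+d(b)+d(c)+d(d)\geqslant 3n+2$. On the other hand, the complete list of forbidden arc types in $G-v^\ast$ yields the \emph{exact} upper bound $d(a)+d(b)+d(c)+d(d)\leqslant 3(|A|+|B|+|C|+|D|)+4|X|-2=3n+|X|-2$, contradicting $|X|\leqslant 1$. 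This four-vertex argument is pointwise and exact; it never appeals to the $O(\delta n)$ size estimates, which is why the ``$+2$'' survives. The paper also treats separately the degenerate case $|A|=0$ (where one uses three vertices $b,c,d$ and the two non-arcs $dc,cb$ together with the bound $|B|+|D|\leqslant n/2-1$), and the triangle case (where one first shows $|B|+|D|\leqslant n/2-1$ via two non-adjacent vertices in $B\setminus V(T)$ and in $D\setminus V(T)$, then picks vertices outside $T$). Your write-up would need these exact, vertex-level arguments in place of the averaged sums.
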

	\begin{proof}
		Suppose to the contrary that there are no two vertex-disjoint special edges for $(A,B,C,D)$. Since $G$ is an oriented graph, only the following two cases happen.

		\medskip
		\textrm{\textbf{Case 1: There are exactly three special edges in $G$, and they form a triangle.}}
		\medskip
		
		Let $T$ be the triangle induced by the three special edges. It should be mentioned that $T$ possibly is not directed. Observe that all edges in $G[B]$ are special and $|B|= n/4 \pm O(\delta n)$ by \ref{NP2}. Thus, we can choose two vertices $b_1,b_2$ in $B\backslash V(T)$ such that $b_1b_2\notin E(G)$ and $b_2b_1 \notin E(G)$. By the Ore-type condition, we have $$d(b_1)+d(b_2)=d^+(b_1)+d^-(b_1)+d^+(b_2)+d^-(b_2)\geqslant (3n+2)/2.$$ On the other hand, since all special edges are incident to vertices in $V(T)$ and $b_1,b_2\notin V(T)$, both $b_1$ and $b_2$ have in-degree and out-degree zero in $B$. This implies that $d(b_1)+d(b_2)\leqslant   2(|A|+|C|+|D|)$.  Combine the upper and lower bounds for $d(b_1)+d(b_2)$, we have $|B|=n-(|A|+|C|+|D|)\leqslant  (n-2)/4$. Similarly, $|D|\leqslant  (n-2)/4$.

		Since all edges in $G[A]$ and $G[C]$ are not special, we have $|V(T)\cap A|\leqslant 1$ and $|V(T)\cap C| \leqslant  1$. It follows by \ref{NP1} and \ref{NP2} that both $|B\backslash V(T)|$, $|C\backslash V(T)|$ and $|D\backslash V(T)|$ are larger than one. Let $b\in B,c\in C,d\in D$ be three vertices not in $V(T)$.  First we consider the case  that $|A|\geqslant 1$ and assume $a\in A$.  Since the edges in $E(T)$ are the only special edges,  we have $ac,ca\notin E(G)$. 
		The Ore-type degree condition implies that $d(a)+d(c) \geqslant (3n+2)/2$. On the other hand, we have $d(a)+d(c) \leqslant (|A|-1+|B|+|D|+ |V(T) \cap C|)+(|C|-1+|B|+|D|)$. Combining these bounds yields $|B|+|D|\geqslant n/2+2$, which contradicts the fact that $|B|+|D|\leqslant n/2-1$.
		
		So it suffices to consider the case that $A=\emptyset$. Note that as $b,c,d\notin V(T)$, there is no edge from $d$ to $C\cup D$ and no edge from $B\cup C$ to $b$. In particular, we get that $dc, cb\notin E(G)$. Again, by the Ore-type degree condition, we have $d^+(d)+d(c)+d^-(b)\geqslant (3n+2)/2$. Meanwhile, $d^+(d)+d(c)+d^-(b) \leqslant  |B| + (|B|+|D|+|C|-1)+|D|$. Combining the two bounds again, we have $|B|+|D|\geqslant n/2+2$, which contradicts the fact that $|B|+|D|\leqslant n/2-1$. 
		
		\medskip
		\textrm{\textbf{Case 2: All special edges are incident to a common vertex, or there are no special edges in $G$.}}
		\medskip
		
		If special edges exist, let $v$  incident to all of them, and define $X=\{v\}$. Otherwise, set $X=\emptyset$. Remove $X$ from the original partition, and denote the resulting subsets again by $A,B,C$ and $D$. Clearly, $n=|A|+|B|+|C|+|D|+|X|$.
		
		First, suppose  $|A|\geqslant 1$. Choose one vertex from each (updated) set: $a \in A$, $b \in B$, $c \in C$, and $d \in D$.  
		Since all special edges are incident to the vertex in $X$, none of $ad,dc,cb$ and $ba$ is an edge of $G$.
		Then the Ore-type condition implies that  $$3n+2\leqslant  d(a)+d(b)+d(c)+d(d) <  3(|A|+|B|+|C|+|D|) +4|X|-2,$$ which contradicts the fact that $|X|\leqslant 1$.
		
		It remains to consider the case $|A|=0$. As argued in the first paragraph of \textrm{\textbf{Case 1}}, we can obtain $|B|+|D|\leqslant n/2-1$. Choose vertices $d \in D$, $b \in B$ and $c \in C$. Again, as all special edges are incident to the vertex in $X$, neither $dc$ nor $cb$ is an edge of $G$. Then the Ore-type condition implies that  $$(3n+2)/2\leqslant  d^+(d)+d(c)+d^-(b) \leqslant |B|+(|C|-1+|B|+|D|)+|D|+3|X|.$$ Then $n/2-1\geqslant|B|+|D|\geqslant n/2+2-2|X|$, which contradicts the fact that $|X| \leqslant  1$. 
		
		Therefore, there are two vertex-disjoint special edges for $(A,B,C,D)$, which completes the proof of Lemma \ref{LEM:special-edges}.
	\end{proof}

	In order to construct the desired antidirected cycle, we now introduce two key definitions that will be essential for our subsequent arguments. The first formalizes the notion of a vertex being well-connected to specific subsets of vertices, while the second  defines a particular type of path that will serve as a building block in our cycle construction.
	
	\begin{definition} \emph{(Acceptable vertex)}\label{DEF-accevtx}
		Let $U_1, U_2 \subseteq V(G)$ and $v \in V(G)$. If $d^-(v,U_1)\geqslant n/100$  and $d^+(v,U_2)\geqslant n/100$, then $v$ is said to be acceptable for $(U_1,U_2)$ and denote it by $v\in \mathcal{A}(U_1,U_2)$.
	\end{definition}

	\begin{definition} \emph{(Proper path)}\label{DEF-goodpath}
		Let $\varepsilon \ll \delta<1$ and let $(A,B,C,D)$ be an $\varepsilon$-nice partition of $V(G)$. A path $P=v_1v_2\cdots v_d$ is called a $D$-proper path if it satisfies the following:
		\begin{enumerate}[label =\upshape \textbf{(P\arabic{enumi})}, ref =\upshape (P\arabic{enumi})]
			\setlength{\itemindent}{1.5em}
			\item $P$ is an antidirected path of order at most 10;\label{P1}
			\item  $v_1v_2, v_{d-1}v_d\in E(G)$;\label{P2}
			\item the end-vertices $v_1,v_d$ are $\delta$-good vertices of $D$.\label{P3}
		\end{enumerate}
	\end{definition} 
	
	Note that every $D$-proper path $P=v_1v_2\cdots v_d$ must have even order since it is an antidirected path with $v_1v_2, v_{d-1}v_d\in E(G)$. 
	
	With these definitions established, we now state and prove the following lemma, which guarantees the existence of two vertex-disjoint proper paths under the given conditions.
	
	\begin{lemma}\label{LEM-goodpath}
		Let $n$ be an even positive integer and let $0<1/n\ll \varepsilon \ll 1$. Suppose  $G$ is an $n$-vertex oriented graph with $\sigma_{+-}(G)\geqslant (3n+2)/4$. Then for every $\varepsilon$-nice partition $(A,B,C,D)$ of $V(G)$, there are two vertex-disjoint $D$-proper paths in $G$.
	\end{lemma}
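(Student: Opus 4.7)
The plan is to construct the two $D$-proper paths explicitly using the vertex-disjoint special arcs from Lemma \ref{LEM:special-edges}, attaching short ``typical-arc tails'' to each special arc to reach $\delta$-good endpoints in $D$. Write $D^*\subseteq D$ for the set of $\delta$-good vertices of $G$ contained in $D$, so that $|D^*|\geqslant |D|-\delta n$ by Lemma \ref{LEM-findbad}, and similarly define $A^*, B^*, C^*$. Throughout, the \emph{typical} arcs of the nice partition are those consistent with Definition \ref{DEF-goodvtx}: the cyclic pattern $A\to B\to C\to D\to A$ together with $D\to B$, $B\to D$, and the near-tournaments $G[A]$, $G[C]$. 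Each proper path will contain a special arc as a single ``off-type'' arc, while all other arcs are typical and chosen through good vertices in the abundant reservoirs $A^*, B^*, C^*, D^*$.

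Let $e_1, e_2$ be the two vertex-disjoint special arcs from Lemma \ref{LEM:special-edges}. The construction proceeds by case analysis on which of the eight possible types in $E(A\cup D, C\cup D)\cup E(B\cup C, A\cup B)$ the arc $e_i$ belongs to. In each case we exhibit an antidirected path of order at most $10$ containing $e_i$ as an internal (or boundary) arc, with endpoints in $D^*$, and with first and last arcs going forward. For example, when $e_i=uv\in E(D,C)$ we aim for an order-$6$ path $v_1\,v_2\,u\,v\,v_5\,v_6$: pick $v_1, v_6\in D^*$, then $v_2\in B$ with $v_1v_2, uv_2\in E(G)$ (the intersection $N^+(v_1)\cap N^+(u)\cap B$ has size $\Omega(n)$ by the good-vertex bounds, after replacing $u$ by a nearby good vertex if necessary), and $v_5\in C^*$ with $v_5v, v_5v_6\in E(G)$ (using the near-tournament $G[C]$ and the typical $C\to D$ direction). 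The other seven types are handled by analogous constructions, possibly with tails of length up to $4$ on each side. We build $P_1$ first; when constructing $P_2$ we exclude the at most $10$ vertices of $P_1$ from every selection, which causes no issue since every reservoir has linear size in $n$.

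The main technical obstacle is the degenerate regime where $|A|$ is very small (say $|A|\leqslant C_0\delta n$ for a suitable constant $C_0$), noting that by \ref{NP1} the symmetric case $|C|$ small cannot occur. In this regime, the bound $d^-(d,B)\geqslant |A|/2-\delta n$ for good $d\in D$ becomes vacuous, so $B\to D$ is no longer a usable typical direction; all tails on the sink side must then be routed through $C\to D$, with the near-tournament $G[C]$ providing the required internal linkage. We re-run the eight-case analysis in this regime, verifying that each proper path still has order at most $10$ and that the relevant intersections remain $\Omega(n)$. Outside this regime, the argument reduces to routine degree counting using the Ore-type bound $\sigma_{+-}(G)\geqslant (3n+2)/4$, Lemma \ref{LEM-findbad}, and Definition \ref{DEF-goodvtx}.
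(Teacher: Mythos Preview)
Your high-level plan matches the paper's: obtain two vertex-disjoint special arcs from Lemma \ref{LEM:special-edges} and extend each one, via short antidirected tails through good vertices, to a $D$-proper path. The difficulty you have not addressed is that the endpoints $u,v$ of a special arc need not be $\delta$-good. Lemma \ref{LEM:special-edges} gives no control over this, and in near-extremal configurations there may be only a handful of special arcs, all of them incident to bad vertices. For a bad $u$ the bounds of Definition \ref{DEF-goodvtx} are unavailable, so your claim that, say, $N^+(v_1)\cap N^+(u)\cap B$ has size $\Omega(n)$ is unjustified; the parenthetical ``after replacing $u$ by a nearby good vertex if necessary'' is not a fix, since $u$ is an endpoint of the special arc and cannot be replaced while retaining that arc. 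Concretely, if $uv\in E(D,C)$ with $u$ bad, $u$ could have $d^+(u,B)=0$, and routing the tail on the $u$-side then requires a further case split on where the $\geqslant n/6$ out-neighbours of $u$ actually lie; in the small-$|A|$ regime this cascades and it is not clear the path stays within order $10$.

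The paper closes this gap by a preprocessing step you are missing: before invoking Lemma \ref{LEM:special-edges}, every $\delta$-bad vertex $v$ is \emph{reassigned} to whichever of $A,B,C,D$ is consistent with its actual degree profile, using the ``acceptable'' criterion of Definition \ref{DEF-accevtx} (e.g.\ $v$ goes to $B$ iff $v\in\mathcal{A}(A\cup D,C\cup D)$). This is well-defined because $\delta^0(G)\geqslant n/6$. The resulting partition is still $\delta$-nice, so Lemma \ref{LEM:special-edges} applies to it; and now membership in a part carries a uniform guarantee: every $u\in B\cup C$ (good or bad) satisfies $d^+(u,C\cup D)\geqslant n/100$, and every $v\in A\cup B$ satisfies $d^-(v,A\cup D)\geqslant n/100$. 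With this in hand the extension is genuinely routine and collapses to two cases (special arc in $E(B\cup C,A\cup B)$ or in $E(A\cup D,C\cup D)$) rather than eight, with a single extra branch when $|A|<n/200$. You should either incorporate this reassignment step, or else supply the full case analysis for bad endpoints and verify the order bound in each branch.
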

	\begin{proof} By Lemma \ref{LEM-findbad}, there exists a constant $\delta \gg \varepsilon$ such that $G$ contains at most $\delta n$ $\delta$-bad vertices. Reassign each bad vertex $v$ according to the following rules:
		
		\settasks{label= \textbullet} 
		\begin{tasks}(2)
			\task Assign $v$ to $A$ if $v \in \mathcal{A}(A \cup D, A\cup B)$;
			\task Assign $v$ to $B$ if $v \in \mathcal{A}(A \cup D, C \cup D)$;
			\task Assign $v$ to $C$ if $v \in \mathcal{A}(B\cup C, C \cup D)$;
			\task Assign $v$ to $D$ if $v \in \mathcal{A}(B\cup C, A\cup B)$.
		\end{tasks}

		It is not difficult to check that the reassignment is valid as $\delta^0(G) \geqslant n/6$ by Proposition \ref{PROP:Ore-implies-semideg}. For simplicity, we continue to denote the resulting partition as $(A,B,C,D)$. By symmetry, we may assume $|A|\leqslant|C|$; otherwise, reverse all edges of  $G$ and swap the labels of $A$ and $C$. 
		So the new partition remains $\delta$-nice. Moreover, by Lemma \ref{LEM:special-edges}, $G$ contains two vertex-disjoint special edges for $(A,B,C,D)$.

		We next show that every special edge can be extended to a $D$-proper path while avoiding a fixed set $W$ with constant size. The claim then follows by constructing such paths for each of the two vertex-disjoint special edges, ensuring that each path avoids the vertices used in the other. Note that if $P$ is a $D$-proper path for the new partition $(A,B,C,D)$, then it remains $D$-proper for the original $\varepsilon$-nice partition, since the end-vertices of $P$ are $\delta$-good vertices and thus belong to the original set $D$.

		Let $uv$ be any special edge and let $W\subseteq V(G)\backslash\{u,v\}$ be any set with $|W|\leqslant 20$. Next we show that $uv$ can be extended to a $D$-proper path avoiding all vertices of $W$. First, suppose $uv \in E(B\cup C, A\cup B)$. By Definition \ref{DEF-accevtx} and the reassignment of bad vertices, we have 
		\[
		d^+(u, C \cup D) \geqslant n/100 \quad \text{and} \quad d^-(v, A \cup D) \geqslant n/100.
		\]
		
		Since there are at most $\delta n$ $\delta$-bad vertices and $|W|\leqslant 20$, we can choose a $\delta$-good vertex $u_1$ in $N^+(u,C\cup D)\backslash W$ and a $\delta$-good vertex $v_1$ in $N^-(v,A \cup D)\backslash W$. By  \ref{GA}-\ref{GD}, $u_1$ has a $\delta$-good in-neighbor $u_2\in C\backslash W$ and $u_2$ has a $\delta$-good out-neighbor $u_3\in D\backslash W$. Similarly, if  $|A|\geqslant n/200$, then $v_1$ has a $\delta$-good out-neighbor in  $v_2\in A\backslash W$ and $v_2$ has a $\delta$-good in-neighbor $v_3\in D\backslash W$. Then the path $v_3v_2v_1vuu_1u_2u_3$ is the desired $D$-proper path. For the case that $|A|<n/200$, we have $d^-(v,D)\geqslant n/200$ as  $d^-(v, A \cup D) \geqslant n/100$. Then we may assume that the vertex $v_1$ belongs to $D\backslash W$ and thus  $v_1vuu_1u_2u_3$ is the desired $D$-proper path.

		The case $uv\in E(A\cup D, C\cup D)$ can be handled similarly, and we omit the details.
	\end{proof}

	Building upon the previous results, we now present a more technical lemma that describes how to construct an antidirected path incorporating the proper paths while satisfying several important properties. This lemma will be instrumental in the final stage of our proof.

	\begin{lemma}\label{LEM-ACpath}
		Let $n$ be an even positive integer and let $0<1/n\ll \varepsilon \ll 1$. Then there exists $\delta \gg \varepsilon$ such that the following holds. Suppose that $G$ is an $n$-vertex oriented graph with $\sigma_{+-}(G)\geqslant 3n/4-O(\varepsilon n)$ and $(A,B,C,D)$ is an $\varepsilon$-nice partition of $G$. If $G$ contains two vertex-disjoint $D$-proper paths $P_1$ and $P_2$, then there exists an antidirected path $P$ satisfying the following:
		\begin{enumerate}[label =\upshape \textbf{(L\arabic{enumi})}, ref =\upshape (L\arabic{enumi})]
			\setlength{\itemindent}{1.5em}
			\item $A\cup V(P_1)\cup V(P_2)\subseteq V(P)$ and $P$ contains all $\delta$-bad vertices of $G$; \label{L1}
			\item both end-vertices of $P$ are $\delta$-good vertices in $D$ and they are sinks of $P$; \label{L2}
			\item $|C\backslash V(P)| > |B\backslash V(P)|+ |D\backslash V(P)|+n/300$; \label{L3}
			\item $|C\cap V(P)|= O(\delta n)$. \label{L4}
		\end{enumerate}
	\end{lemma}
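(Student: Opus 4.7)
The plan is to build $P$ from three modular pieces: the two $D$-proper paths $P_1,P_2$ supplying the two $\delta$-good $D$-sink endpoints required by \ref{L2}; an antidirected ``spine'' covering all of $A$ by strong $A\to B$ arcs; and short constant-size gadgets absorbing the remaining $\delta$-bad vertices and providing the slack for \ref{L3}. Write $P_i=v_1^{(i)}\cdots v_{d_i}^{(i)}$. Because $P_i$ is antidirected and $v_{d_i-1}^{(i)}v_{d_i}^{(i)}$ is an arc (\ref{P2}), the vertex $v_{d_i}^{(i)}$ is a sink while $v_1^{(i)}$ is a source of $P_i$. We reverse $P_1$, leave $P_2$ unchanged, and use $v_{d_1}^{(1)}$, $v_{d_2}^{(2)}$ as the two endpoints of $P$; the internal source-ends $v_1^{(i)}$ are attached to the spine via a single arc $v_1^{(i)}\to b$ to a fresh good $b\in B$, which exists because $d^+(v_1^{(i)},B)\geqslant|C|/2-\delta n$ by \ref{GD}.

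Between $v_1^{(1)}$ and $v_1^{(2)}$ we construct the spine
\[
v_1^{(1)}\to b_0\leftarrow a_1\to b_1\leftarrow a_2\to\cdots\to b_{|A|-1}\leftarrow a_{|A|}\to b_{|A|}\leftarrow v_1^{(2)},
\]
using every $a\in A$ as a source and fresh good $B$-vertices as sinks. All interior arcs $a\to b$ are strong by \ref{GA} and \ref{GB}, so the spine can be built by a straightforward greedy matching in the bipartite subdigraph $G[A,B]$; in the degenerate case $|A|>|B|-O(\delta n)$, the $O(\delta n)$ surplus $A$-vertices are absorbed into short within-$A$ antidirected segments, which is possible since $d^{\pm}(a,A)\geqslant|A|/2-\delta n$ for every good $a\in A$ by \ref{GA}. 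Each $\delta$-bad vertex $v$ is then spliced into the spine by a constant-length antidirected detour: using the reassignment from the proof of Lemma~\ref{LEM-goodpath}, $v\in\mathcal{A}(U_1,U_2)$ for prescribed $U_1,U_2\subseteq V(G)$, so $v$ has at least $n/100$ in-neighbors in $U_1$ and at least $n/100$ out-neighbors in $U_2$; picking fresh such neighbors and replacing one spine arc by an antidirected gadget through them absorbs $v$. Since $|V(P_1)\cup V(P_2)|\leqslant 20$ and there are at most $\delta n$ bad vertices, only $O(\delta n)$ vertices of $C$ ever enter $V(P)$, which gives \ref{L4}.

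To secure \ref{L3} we finally insert a linear number (on the order of $n/600$) of ``balance'' detours of the form $a_i\to b\leftarrow d\to b'\leftarrow a_{i+1}$, using fresh good $d\in D$ and $b'\in B$; the new arcs $d\to b$ and $d\to b'$ are half-strong by \ref{GD} and $a_{i+1}\to b'$ is strong by \ref{GA}. Each detour adds one vertex to $B\cap V(P)$ and one to $D\cap V(P)$, so after sufficiently many detours $|B\cap V(P)|+|D\cap V(P)|>|A|+n/300+O(\delta n)$, which together with the nice-partition identities $|A|+|C|,\,|B|+|D|=n/2\pm O(\varepsilon n)$ rearranges to $|C\setminus V(P)|>|B\setminus V(P)|+|D\setminus V(P)|+n/300$, i.e., \ref{L3}. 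Property \ref{L1} is immediate from the construction.

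The main obstacle is coordinating the three greedy procedures---spine construction, bad-vertex gadgets, and balance detours---since they all draw from the same pool of good $B$- and $D$-vertices while each step must preserve the antidirected orientation and the source/sink specification of every position. However, the bad set has size $\leqslant\delta n\ll n/4$, the two proper paths together use at most $20$ vertices, and each of $B,D$ contains $n/4-O(\varepsilon n)$ good vertices, so at every step a constant fraction of candidate good vertices remains free; combined with the linear semidegree guarantees of \ref{GA}--\ref{GD}, a greedy choice is always available and the construction goes through.
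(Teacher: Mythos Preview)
Your construction has a genuine gap when $|A|$ is close to $n/4$ (which is allowed by \ref{NP1}--\ref{NP2}, since we only know $|A|\leqslant|C|$ and $|A|+|C|=n/2\pm O(\varepsilon n)$). In that regime your spine
\[
v_1^{(1)}\to b_0\leftarrow a_1\to b_1\leftarrow\cdots\leftarrow a_{|A|}\to b_{|A|}\leftarrow v_1^{(2)}
\]
consumes $|A|+1$ fresh good vertices of $B$, and since $|B|=n/4\pm O(\varepsilon n)$ this can exhaust $B$ up to $O(\varepsilon n)$ vertices. Your balance detours $a_i\to b\leftarrow d\to b'\leftarrow a_{i+1}$ each require one \emph{additional} fresh $B$-vertex, so after $k$ detours the total $B$-usage is $|A|+1+k$; the constraint $|A|+1+k\leqslant|B|-O(\delta n)$ forces $k\leqslant|B|-|A|-O(\delta n)$, which can be as small as $O(\varepsilon n)$. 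On the other hand, your own computation for \ref{L3} requires $|A|+|C|+2k>|B|+|D|+n/300$, i.e.\ $k>n/600-O(\varepsilon n)$. These two constraints are incompatible whenever $|B|-|A|<n/600$, so \ref{L3} cannot be achieved. The within-$A$ absorption you mention only handles an $O(\delta n)$ surplus and does not resolve this $\Theta(n)$ shortfall.

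The paper's proof avoids this by two devices you are missing. First, it builds a separate antidirected path $P^{\ast}$ of order $n/200$ alternating between $B$ and $D$ (using the strong $D\to B$ arcs from \ref{GD}); this single path already removes $n/400$ vertices from each of $B$ and $D$ and is what actually drives \ref{L3}, independently of any later balancing. Second, when $|A|>n/50$ the paper does \emph{not} pair every $A$-vertex with a $B$-vertex; instead it splits the remaining $A$-vertices between segments of form $(AD)^{\lceil s/2\rceil}$ and $(AB)^{\lfloor s/2\rfloor}$, so that $B$ and $D$ are each consumed at rate roughly $|A|/2$ rather than $|A|$. Your spine uses only the $A\to B$ side and ignores the equally strong $D\to A$ arcs guaranteed by \ref{GA}, which is precisely what causes the exhaustion. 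A secondary issue is that your bad-vertex gadgets are underspecified: a bad vertex $v\in\mathcal{A}(B\cup C,C\cup D)$ has its usable neighbours in $C\cup D$, and splicing it into an $A$--$B$ spine while maintaining the antidirected pattern requires a careful $C$/$D$ routing (the paper's separate $P_C$ construction), not just ``replacing one spine arc''.
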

	\begin{proof}

		To ensure the condition \ref{L3}, 
		we first construct a long antidirected path $P^{\ast}$ between $B$ and $D$. Let $H$ be the underlying bipartite graph with bipartition $(B,D)$ and edge set $E(D,B)$. Since $(A,B,C,D)$ is an $\varepsilon$-nice partition, the property \ref{NP1} guarantees that $|C|\geqslant n/4-O(\varepsilon n)$. By Lemma \ref{LEM-findbad}, $G$ contains at most $\delta n$ $\delta$-bad vertices. Combining this with \ref{GD}, we have 
		$$e(H)=e(D,B)\geqslant (|D|-\delta n)(|C|/2-\delta n)\geqslant n^2/32 -O(\delta n^2).$$ 
		Since $V(H)=|B\cup D|=n/2\pm O(\varepsilon n)$, Proposition \ref{PRO-findsubgraph} implies that there exists  a subgraph $H^\prime \subseteq H$ with $\delta(H^\prime) \geqslant n/100$. Moreover, since $G$ has at most  $\delta n$ $\delta$-bad vertices, $H^\prime$ contains a path of order $n/200$ whose end-vertices are in $D$ and all vertices are $\delta$-good. This path clearly corresponds to the desired antidirected path $P^{\ast}$.

		Set $\mathcal{P}=V(P_1\cup P_2\cup P^{\ast})$.  Clearly, $|\mathcal{P}|\leqslant n/100$ by \ref{P1}.  Next, we reassign each $\delta$-bad vertex $v$ of $V(G)\backslash \mathcal{P}$ to one of the sets $\mathcal{B}_A$ and $\mathcal{B}_C$ according to the following criteria:

		$\bullet$ Assign $v$ to $\mathcal{B}_A$ if one of $d^+(v,A)$, $d^-(v,A)$ and $d^-(v,D)$ exceeds $n/50$; 
		
		$\bullet$ Assign $v$ to $\mathcal{B}_C$ if  one of $d^-(v,C)$, $d^+(v,C)$ and $d^+(v,D)$ exceeds $n/50$.

		This assignment is valid. Indeed, by Proposition \ref{PROP:Ore-implies-semideg}, we have $\delta^0(G)\geqslant n/6$. If a bad vertex $v$ could not be assigned to $\mathcal{B}_A\cup \mathcal{B}_C$, then we would have $d(v,B)\geqslant 2(n/6-3n/50)>|B|$, which contradicts the fact that $G$ is oriented.  Remove all vertices of $\mathcal{B}_A\cup\mathcal{B}_C$ from the partition and continue to denote the resulting sets as $A,B,C$ and $D$. Observe that now all  vertices in $A\cup B \cup C\cup D$ are $\delta$-good except for the internal vertices in $P_1\cup P_2$. Note that $P_1$ and $P_2$ are still $D$-proper paths since all end-vertices are $\delta$-good vertices in $D$.

		Let $(X,Y)\in\{(A,B),(B,C),(C,D),(D,A)\}$. It follows by \ref{GA}-\ref{GD} that $d^+(x,Y)\geqslant |Y|-\delta n$ and $d^-(y,X)\geqslant |X|-\delta n$ for any $x\in X,y\in Y$. Therefore, we have 
		
		\medskip
		\textbf{(XY)} For any $x_1,x_2\in X$ and $y_1,y_2\in Y$ with $(X,Y)\in\{(A,B),(B,C),(C,D),(D,A)\}$, we have $|N^+(x_1,Y)\cap N^+(x_2,Y)|\geqslant |Y|-2\delta n$ and $|N^-(y_1,X)\cap N^-(y_2,X)|\geqslant |X|-2\delta n$. 
		\medskip
		
		Similarly, we have the following statements  by \ref{GA}-\ref{GD} and $|\mathcal{B}_A\cup\mathcal{B}_C|\leqslant \delta n$.

		\textbf{(CD)} $|N^-(c,C)\cap N^-(d,C)|\geqslant |C|/2-4\delta n$ for any $c\in C,d\in D$. 
		
		\textbf{(DD)}  $|N^+(d_1,B)\cap N^+(d_2,B)|\geqslant |C|-|B|-4\delta n$ for any $d_1,d_2\in D$. 
		
		\textbf{(AD)}  $|N^+(a,A)\cap N^+(d,A)|\geqslant |A|/2-4\delta n$ and $|N^+(a,B)\cap N^+(d,B)|\geqslant |C|/2-4\delta n$  for any $a\in A,d\in D$. 
		
		\medskip
		
		Now we construct a (short) path $P_C$ in $G- \mathcal{P}$ covering all vertices of $\mathcal{B}_C$. Let $\mathcal{B}_C^1, \mathcal{B}_C^2$ and $\mathcal{B}_C^3$ be disjoint sets of vertices in $\mathcal{B}_C$ with $d^+(v,D)>n/50$, $d^+(v,C)>n/50$ and  $d^-(v,C)>n/50$, respectively. 
		For each $v\in \mathcal{B}_C^1$, pick two out-neighbors of $v$ in $D\backslash \mathcal{P}$. Moreover, as $|\mathcal{P}|\leqslant n/100$, one may assume that those out-neighbors are pairwise distinct. Applying  (XY) with $(X,Y)=(C,D)$, every pair of vertices in $D$ has at least $|C|-2\delta n\geqslant n/5$ common in-neighbors in $C\backslash \mathcal{P}$ as   $|C|\geqslant n/4-O(\varepsilon n)$. Repeat the above progress, one may get an antidirected path $L_1$ in $G-\mathcal{P}$ covering all vertices of $\mathcal{B}_C^{1}$ with form 
		$d_1C(D\mathcal{B}_C^{1}DC)^{|\mathcal{B}_C^{1}|}d_2$, where $d_1,d_2 \in D$. Clearly $d_1,d_2$ are sink vertices of the path $L_1$.
		
		In the same way, applying  (XY) with $(B,C)$ and $(C,D)$ respectively, $G-\mathcal{P}$ has two disjoint antidirected paths  
		$L_2=c_1B(C\mathcal{B}_C^{2}CB)^{|\mathcal{B}_C^{2}|}c_2$ and 
		$L_3=c_3D(C\mathcal{B}_C^{3}CD)^{|\mathcal{B}_C^{3}|}c_4$, where $c_i \in C$ for each $i\in[4]$.  Clearly, $c_1,c_2$ are sink vertices of $L_2$ and  $c_3,c_4$ are source vertices of $L_3$. For $c_i$ with $i=3,4$,  pick an unused out-neighbor $d_i$ of $c_i$ in $D$ according to  \ref{GC}. By (CD),  $c_1$ and $d_3$ (resp., $c_2$ and $d_2$) have more than $n/10$ common in-neighbors in $C$. Then we get the desired path $P_C=(d_1)L_1CL_2CDL_3d_4$ with $|P_C|=O(\delta n)$ as $|\mathcal{B}_C|\leqslant \delta n$. Note that the end-vertices of $P_C$ are sink vertices in $D$.

		\medskip
		We next construct an antidirected path $P_A$ in the remaining $G$ to cover all vertices of $A\cup \mathcal{B}_A \cup \mathcal{P}$. In the following, we always assume that the selected vertex is unused. This is possible since  $|P_1|,|P_2|,|P_C|,|\mathcal{B}_A|,|\mathcal{B}_C|=O(\delta n), |P^{\ast}|=n/200$ and  the number of  common neighbors are large, as stated in  (XY),  (CD), (DD) and (AD). 
		
		First noted that (XY) shows that every pair of vertices of $D$ has at least $|A|-2\delta n$ common out-neighbors in $A$. Hence we do not have enough vertices  to connect two vertices of $D$ when $|A|$ is small.  However, if $|A|$ is small, say $|A|\leqslant n/50$, then \ref{NP1}-\ref{NP2} and (DD) imply that every two vertices of $D$ has at least $|B|-n/50-O(\delta n)$ common out-neighbors in $B$. So next we construct $P_A$ by considering the following two cases based on the size of $A$.

		\medskip
		\textrm{\textbf{Case 1: $|A|\leqslant n/50$.}}
		\medskip
		
		\begin{figure}
			
			\subfigure{
				\begin{minipage}[t]{0.49\linewidth}
					\centering
					\begin{tikzpicture}[black,line width=1pt,scale=0.7]
						\path (-6,-3) (6,3); 
						
						\draw[draw=black] (3,-3) rectangle (4,3);
						\draw[draw=black] (-3,-3) rectangle (-4,3);
						\draw[draw=black] (-1,3.25) rectangle (1,4.25);

						\foreach \i/\j in {(3.5,1.2)/b1,(3.5,0.6)/b9,(3.5,0)/b2,(3.5,-0.6)/b3,(3.5,2.5)/b4,(3.25,1.75)/b5,(3.5,-2.5)/b8,(3.5,-1.9)/b7,(3.5,-1.2)/b6,(0.75,3.9)/a1,(0.425,3.65)/a2,(0.1,3.4)/a3,(-0.25,3.4)/a4,(-0.65,3.9)/a5,(-3.5,1.7)/d14,(-3.5,1.3)/d1,(-3.5,0.9)/d2,(-3.5,0.5)/d3,(-3.5,0.1)/d4,(-3.5,-0.3)/d5,(-3.5,-0.8)/d6,(-3.5,-1.3)/d7,(-3.5,-1.8)/d8,(-3.5,-2.3)/d9,(-3.5,-2.8)/d10,(-3.6,2.75)/d11,(-3.4,2.5)/d12,(-3.2,2.25)/d13}{\filldraw[black]\i circle (1.5pt)coordinate(\j);}
						
						\foreach \i/\j in {(-4.5,1.1)/u1,(-4.5,0.3)/u2}{
							\node at \i [rectangle,draw,inner sep=0.7mm][](\j){};}
						
						\coordinate (u3) at (-5,-0.3);
						\coordinate (u6) at (-5,-0.8);
						\coordinate (u4) at (-5,-1.55);
						\coordinate (u5) at (-5,-2.55);
						\node at (-5.5,-0.55) [black]{$P^\ast$};
						\node at (-5.5,-1.55) [black]{$P_1$};
						\node at (-5.5,-2.55) [black]{$P_2$};
						\node at (0,2.5) [black]{$A$};
						\node at (3.5,-3.3) [black]{$B$};
						\node at (-3.5,-3.3) [black]{$D$};
						
						\node at (2.4,3.9) [black]{$P_{AB}$};
						\node at (-5,0.7) [black]{$Q$};
						\node at (-2.4,3.9) [black]{$P_{DA}$};
						
						\foreach \i/\j in {d11/a5,d12/a5,d12/a4,d13/a4,a1/b4,a2/b4,a2/b5,a3/b5,d1/u1,d2/u1,d3/u2,d4/u2}{\draw[leftlearrow={latex}, black]( \i) -- (\j);}
						
						\foreach \i/\j/\k/\l in {d5/170/170/u3,d7/180/85/u4,u4/-85/180/d8,d9/180/85/u5,u5/-85/180/d10}{\draw [leftlearrow={latex},black] (\i) to [out=\j,in=\k]  (\l);}
						\draw [leftlearrow={latex},black] (d6) to [out=-170,in=-170]  (-4.99,-0.782);
						\draw[decoration={aspect=-0.05, segment length=1.38mm, amplitude=0.8mm,coil},decorate,line width=1pt] (u3) -- (u6);
						\foreach \i/\j in {d14/b2,d14/b9,d13/b1,a3/b1,d1/b2,d2/b3,d3/b3,d4/b6,d5/b6,d6/b7,d7/b7,d9/b8}{\draw[middlearrow={latex}, red,dashed]( \i) -- (\j);}
						\draw [middlearrow={latex},red,dashed] (a1) to [out=0,in=90]  (4.7,1) to  [out=-90,in=45] (b8);
						\draw [middlearrow={latex},red,dashed] (d11) to [out=180,in=160]  (-3.8,2.25) to  [out=-20,in=160] (b9);
						\foreach \i/\j in {(3.5,1.2)/b1,(3.5,0.6)/b9,(3.5,0)/b2,(3.5,-0.6)/b3,(3.5,2.5)/b4,(3.25,1.75)/b5,(3.5,-2.5)/b8,(3.5,-1.9)/b7,(3.5,-1.2)/b6,(0.75,3.9)/a1,(0.425,3.65)/a2,(0.1,3.4)/a3,(-0.25,3.4)/a4,(-0.65,3.9)/a5,(-3.5,1.7)/d14,(-3.5,1.3)/d1,(-3.5,0.9)/d2,(-3.5,0.5)/d3,(-3.5,0.1)/d4,(-3.5,-0.3)/d5,(-3.5,-0.8)/d6,(-3.5,-1.3)/d7,(-3.5,-1.8)/d8,(-3.5,-2.3)/d9,(-3.5,-2.8)/d10,(-3.6,2.75)/d11,(-3.4,2.5)/d12,(-3.2,2.25)/d13}{\filldraw[black]\i circle (1.5pt)coordinate(\j);}
					
					\end{tikzpicture}\caption*{(a) $|A|$ is small}
			\end{minipage}}
			\subfigure{
				\begin{minipage}[t]{0.49\linewidth}
					\centering
					\begin{tikzpicture}[black,line width=1pt,scale=0.7]
						\path (-6,3) (6,-3); 
						\draw[draw=black] (3,-3) rectangle (4,2);
						\draw[draw=black] (-3,-3) rectangle (-4,2);
						\draw[draw=black] (-2,3) rectangle (2,4.5);

						\foreach \i/\j in {(3.5,0)/b2,(3.5,-1)/b3,(1.85,3.15)/a1,(1.85,3.5)/a2,(1.85,4.35)/a3,(1.85,4)/a4,(1.5,3.75)/a5,(-1.85,3.15)/a6,(-1.85,3.5)/a7,(-1.85,4.35)/a8,(-1.85,4)/a9,(-1.5,3.75)/a10,(0.6,3.15)/a11,(1,3.15)/a12,(1.4,3.15)/a13,(-0.35,3.15)/a14,(1,4.35)/a15,(-1.1,3.55)/a16,(0.2,3.15)/a17,(-3.5,1.7)/d14,(-3.5,1.3)/d1,(-3.5,0.9)/d2,(-3.5,0.5)/d3,(-3.5,0.1)/d4,(-3.5,-0.3)/d5,(-3.5,-0.8)/d6,(-3.5,-1.3)/d7,(-3.5,-1.8)/d8,(-3.5,-2.3)/d9,(-3.5,-2.8)/d10}{\filldraw[black]\i circle (1.5pt)coordinate(\j);}
						
						\foreach \i/\j in {(-4.5,1.1)/u1,(-4.5,0.3)/u2,(-3,3.325)/d11,(-3,4.175)/d12,(3,3.325)/b4,(3,4.175)/b5}{
							\node at \i [rectangle,draw,inner sep=0.7mm][](\j){};}

						\coordinate (u3) at (-5,-0.3);
						\coordinate (u6) at (-5,-0.8);
						\coordinate (u4) at (-5,-1.55);
						\coordinate (u5) at (-5,-2.55);
						\node at (-5.5,-0.55) [black]{$P^\ast$};
						\node at (1,4.05) [black]{$v_1$};
						\node at (-0.6,3.55) [black] {$v_2$};
						\node at (-5.5,-1.55) [black]{$P_1$};
						\node at (-5.5,-2.55) [black]{$P_2$};
						\node at (-5,0.7) [black]{$L$};
						\node at (1.75,2.5) [black]{$A$};
						\node at (3.5,-3.3) [black]{$B$};
						\node at (-3.5,-3.3) [black]{$D$};
						\node at (3.5,3.325) [black]{$a_4$};
						\node at (3,3.8) [black]{$a_3$};
						\node at (-3.5,3.325) [black]{$a_2$};
						\node at (-3.5,4.175) [black]{$a_1$};
						\foreach \i/\j in {d11/a6,d11/a7,d12/a8,d12/a9,a1/b4,a2/b4,a4/b5,a3/b5,a15/a8,a16/a6}{\draw[lefttlearrow={latex}, black]( \i) -- (\j);}
						
						\foreach \i/\j in {d1/u1,d2/u1,d3/u2,d4/u2}{\draw[leftlearrow={latex}, black]( \i) -- (\j);}
						
						\foreach \i/\j/\k/\l in {d5/170/170/u3,d7/180/85/u4,u4/-85/180/d8,d9/180/85/u5,u5/-85/180/d10}{\draw [leftlearrow={latex},black] (\i) to [out=\j,in=\k]  (\l);}
						\draw [leftlearrow={latex},black] (d6) to [out=-170,in=-170]  (-4.99,-0.782);
						\draw[decoration={aspect=-0.05, segment length=1.38mm, amplitude=0.8mm,coil},decorate,line width=1pt] (u3) -- (u6);

						\foreach \i/\j in {d2/a11,d3/a11,d4/a12,d5/a12,d6/a13,d7/a13,d14/a14,d9/b3,a16/a14,a1/b3,d1/a17,d14/a17}{\draw[leftlearrow={latex}, red,dashed]( \i) -- (\j);}

						\draw [middlearrow={latex},red,dashed] (a3) to [out=45,in=100]  (4.1,3) to  [out=-80,in=45] (b2);
						\draw [middlearrow={latex},red,dashed] (a15) to [out=45,in=110]  (4.5,3.2) to  [out=-80,in=30] (b2);
						\foreach \i/\j in {(3.5,0)/b2,(3.5,-1)/b3,(1.85,3.15)/a1,(1.85,3.5)/a2,(1.85,4.35)/a3,(1.85,4)/a4,(1.5,3.75)/a5,(-1.85,3.15)/a6,(-1.85,3.5)/a7,(-1.85,4.35)/a8,(-1.85,4)/a9,(-1.5,3.75)/a10,(0.6,3.15)/a11,(1,3.15)/a12,(1.4,3.15)/a13,(-0.35,3.15)/a14,(1,4.35)/a15,(-1.1,3.55)/a16,(0.2,3.15)/a17,(-3.5,1.7)/d14,(-3.5,1.3)/d1,(-3.5,0.9)/d2,(-3.5,0.5)/d3,(-3.5,0.1)/d4,(-3.5,-0.3)/d5,(-3.5,-0.8)/d6,(-3.5,-1.3)/d7,(-3.5,-1.8)/d8,(-3.5,-2.3)/d9,(-3.5,-2.8)/d10}{\filldraw[black]\i circle (1.5pt)coordinate(\j);}
					
					\end{tikzpicture}\caption*{(b)  $|A|$ is large}
			\end{minipage}}
			\caption{An illustration of how to find the desired path $P$. The white diamonds and black circles indicate the vertices in $\mathcal{B}_A\cup \mathcal{B}_C$ and $\delta$-good vertices of $G$, respectively.  The red dashed edges are obtained by finding common neighbors due to (XY), (CD), (DD) and (AD). For simplicity, the set $C$ is  omitted  and  $P_1,P_2,P^{\ast}$ are placed outside of the partition $(A,B,C,D)$.}
			\label{FIG-Asmalllarge}
		\end{figure}
		
		From the assignment of $\delta$-bad vertices and the fact that $|A|\leqslant n/50$, it follows that $d^-(v,D)> n/50$ for each $v\in \mathcal{B}_A$. Similar with the construction of the path $L_1$,  choosing two in-neighbors of each $v\in \mathcal{B}_A$ in $D$, we may get an antidirected path $Q$ in the remaining $G$ with form 
		$(D\mathcal{B}_ADB)^{|\mathcal{B}_A|}D$ by (DD). Moreover, we have $|Q|\leqslant O(\delta n)$ due to  $|\mathcal{B}_A|\leqslant \delta n$.
		
		Now we extend the path $Q$ to an antidirected path $P_A$ covering all remaining vertices of $A$. Applying (XY) with $(A,B)$ and $(D,A)$ respectively, every two vertices in $A$ has many common out-neighbors in $B$ and many in-neighbors in $D$. So we may easily construct two antidirected paths $P_{AB}=(AB)^{\lceil t/2\rceil}A$ and    $P_{DA}=D(AD)^{\lfloor t/2\rfloor}$, where $t$ is the number of remaining vertices of $A$.  Finally, by (XY) with $(D,A)$, we can connect $P_{DA}$, $P_{AB}$ and $Q$ into $Q^{\ast}$ using common out-neighbors in $B$ of end-vertices of $P_{DA}$ and one end-vertex of $P_{AB}$ and of $Q$, respectively. Noted that  all end-vertices of $Q^{\ast}$ and $P^{\ast}$ are source vertices and, each of $P_1,P_2$ has a sink end-vertex and a source end-vertex. Again, (DD) and (AD) show that we can  get the desired path $P_A$ from  $P_1,P^\ast,Q^\ast,P_2$ by picking common out-neighbors of these source end-vertices in $B$, see Figure \ref{FIG-Asmalllarge} (a) for an illustration. Clearly, $|P_A|=|P^{\ast}|+O(\delta n)$, the end-vertices of $P_A$ belong to $D$ and they are sink vertices of $P_A$. Indeed, the end-vertices  of $P_A$ are the sink end-vertices of $P_1$ and $P_2$. 
		
		\medskip
		\textbf{Case 2: $|A|> n/50$.} 
		\medskip

		In this case, every two vertices of $D$ has at least $|A|-2\delta n$ common out-neighbors in $A$, as stated in the property (XY). This means that the vertices of $A$ are useful when we connect two vertices of $D$. In particular, they will be used to connect the paths $P_1,P_2$ and $P^{\ast}$, as shown in \textbf{Case 1}. In contrast, in this case we connect those paths first and then cover the remaining vertices of $A$.
		
		Similar with the arguments when we construct the path $P_C$, the vertices in $\mathcal{B}_A$ can be divided into three sets $\mathcal{B}_A^1,\mathcal{B}_A^2$ and $\mathcal{B}_A^3$. More precisely, every vertex in $\mathcal{B}_A^1$ has at least $n/50$ in-neighbors in $D$ and,  every vertex in $\mathcal{B}_A^2$ and $\mathcal{B}_A^3$ has at least $n/50$ out- and in-neighbors in $A$, respectively. Noted that by \ref{GA} and the fact that $|A|$ is large, we may move some vertices from $A$ into $\mathcal{B}_A^i$ if necessary so that $|\mathcal{B}_A^i|\geqslant 2$ for each $i\in[3]$. For each $v\in \mathcal{B}_A^1$,  choose two of its in-neighbors in $D$ and similarly, pick two out-neighbors (resp., in-neighbors) of each vertex of $\mathcal{B}_A^2$ (resp., $\mathcal{B}_A^3$) in $A$. Since every two vertices of $D$ has at least $|A|-2\delta n$ common out-neighbors in $A$, there is an antidirected path $L=(D\mathcal{B}_A^{1}DA)^{|\mathcal{B}_A^{1}|}D$ covering all vertices in $\mathcal{B}_A^{1}$.  Let $u_1,u_2$ be the end-vertices of $L$. Clearly, both $u_1$ and $u_2$ are source vertices of $L$.

		Recall that $|\mathcal{B}_A^2|\geqslant 2$ and $|\mathcal{B}_A^3|\geqslant 2$. Assume $a_1,a_2\in \mathcal{B}_A^2$ and let $a_i^{\prime},a_i^{\prime\prime},i\in[2]$ be the chosen out-neighbors of $a_i$ in $A$. Moreover, say $a_3,a_4\in \mathcal{B}_A^3$ and let $a_i^{\prime},a_i^{\prime\prime}$ be the chosen in-neighbors of $a_i$ in $A$. Since every vertex of $A$ has large in-degree in $A$, the vertex $a_i^{\prime}$ has an unused in-neighbor $v_i$ in $A$ for each $i\in[2]$.  It follows by (AD) that $u_2$ and $v_2$ have large common out-degree in $A$.  Again, as  every two vertices of $D$ has at least $|A|-2\delta n$ common out-neighbors in $A$, we may connect the paths $L,P_1$ and $P^{\ast}$ by using vertices in $A$. After this, one may get an antidirected path $L_1^{\prime}$ with form $P_1AP^{\ast}ALAv_2a_2^{\prime}a_2a_2^{\prime\prime}$, see Figure \ref{FIG-Asmalllarge} (b) for an illustration. Applying (XY) with $(A,B)$, we have that $v_1$ and $a_3^{\prime}$ have an unused common out-neighbor in $B$. Furthermore, (AD) implies that the source end-vertex of $P_2$ and $a_4^{\prime}$ have an unused common out-neighbor in $B$. Then $L_2^{\prime}:=a_1^{\prime\prime}a_1a_1^{\prime}v_1Ba_3^{\prime}a_3a_3^{\prime\prime}$ and $L_3^{\prime}:= a_4^{\prime\prime}a_4a_4^{\prime}BP_2$ are two antidirected paths.   Applying (XY) with $(D,A)$ and $(A,B)$, respectively, there are two antidirected paths $L_4^{\prime}:= a_2^{\prime\prime} DA(\mathcal{B}_A^2A)^{|\mathcal{B}_A^2|-2}D(AD)^{\lceil s/2\rceil}a_1^{\prime\prime}$  and $L_5^{\prime}:=a_3^{\prime\prime}BA(\mathcal{B}_A^3A)^{|\mathcal{B}_A^3|-2}(AB)^{\lceil s/2\rceil}a_4^{\prime\prime}$ covering all remaining vertices of $A\cup\mathcal{B}_A^2\cup \mathcal{B}_A^3$ for some $s\in \mathbb{N}$. Then $P_A=L_1^\prime L^\prime_4 L^\prime_2 L^\prime_5 L^\prime_3$ is the desired path.
		
		\medskip
		In both cases, the end-vertices of $P_A$ are sink vertices of $P_A$ and they belong to $D$. Recall that $|P_C|=O(\delta n)$ and the end-vertices of $P_C$ are sink vertices of $D$ also. By (XY) with $(C,D)$, the paths $P_C$ and $P_A$ can be connected by an unused vertex in $C$. Let $P$ be the resulting path, that is, $P=P_CCP_A$. Now we claim that $P$ satisfies the conditions of the lemma.  By the construction of $P_C$ and $P_A$, \ref{L1} and \ref{L2} holds clearly. Moreover, the path $P$ satisfies \ref{L4} as $|P_C|=O(\delta n)$ and all vertices of $V(P_A)\backslash V(P_1\cup P_2\cup \mathcal{B}_A)$ are not in $C$. Recall that $|\mathcal{B}_A\cup \mathcal{B}_C|\leqslant \delta n$, $|P^\ast|=n/200$ and the partition $(A,B,C,D)$ is obtained from the original partition $(A,B,C,D)$ by removing all vertices of $\mathcal{B}_A\cup \mathcal{B}_C$. This means that for any $X\in \{A,B,C,D\}$, the sizes of new $X$ and original $X$ differ by at most $\delta n$. So for the new sets $A,B,C,D$, we have  $|B|+|D|\leqslant  n/2+ O(\delta n)$ and $|A|+|C| \geqslant n/2 -O(\delta n)$ by \ref{NP1} and \ref{NP2}. Therefore, by the fact that $A\cup V(P^\ast)\subseteq V(P)$ and \ref{L4}, we have
		\begin{align*}
			|B\backslash V(P)|+|D\backslash V(P)|& \leqslant n/2 +O(\delta n) - |P^{\ast}| - |A|\\
			&\leqslant |C|+O(\delta n) - n/200\\
			&<|C\backslash V(P)| -n/300,
		\end{align*}
		which proves \ref{L3}.
	\end{proof}
	
	\bigskip
	
	We are now ready to prove Theorem~\ref{THM:non-expander-case}. 
	
	\begin{proof}[{\bf Proof of Theorem \ref{THM:non-expander-case}}]
		
		By Lemma \ref{LEM:non-expander-case}, there exists a constant $\varepsilon$ with $\varepsilon \ll  1$ such that $G$ contains an $\varepsilon$-nice partition $(A,B,C,D)$. Furthermore, by Lemmas  \ref{LEM-goodpath} and \ref{LEM-ACpath}, there exist a constant $\delta$ with $\varepsilon\ll \delta \ll 1$ and an antidirected path $P$ in $G$ satisfying \ref{L1}-\ref{L4}.
		
		Next we  extend $P$ into an antidirected Hamilton cycle. Note that $|C\backslash V(P)| \geqslant n/4 -O(\varepsilon n) - |C\cap V(P)|\geqslant n/5$ by \ref{NP1} and \ref{L4}. Lemma \ref{LEM-randompartition} and \ref{L3} ensure the existence of a partition $C_1,C_2$ of $C\backslash V(P)$ satisfying the following:
		\begin{enumerate}[label =\upshape \textbf{(CP\arabic{enumi})}, ref =\upshape (CP\arabic{enumi})]
			\setlength{\itemindent}{1.5em}
			\item $|C_1| = |B\backslash V(P)| + |D\backslash V(P)| + 2\sqrt{\delta} n$;\label{CP1}
			\item $|C_2|=|C\backslash V(P)|-|C_1|\geqslant n/400$;\label{CP2}
			\item $|d^\pm (v,C_i) - \frac{|C_i|}{|C\backslash V(P)|}d^\pm (v,C\backslash V(P))| \leqslant \varepsilon n$ for each $i\in[2]$ and  $v \in G$.\label{CP3}
		\end{enumerate}

		Let $b \in B \backslash V(P)$ and $d \in D \backslash V(P)$ be two arbitrary vertices. Note that both $b$ and $d$ are $\delta$-good. It follows by \ref{GB} and \ref{GD} that  all but at most $\delta n$ vertices of $C$ are out-neighbors of $b$ and  in-neighbors of $d$.  So $d^+(b,C_1), d^-(d,C_1)\geqslant |C_1|-\delta n$. Hence, for each $b_1,b_2\in B\backslash V(P)$, \ref{CP1} implies that  $|N^+(b_1,C_1)\cap N^+(b_2,C_1)|\geqslant |C_1|-2\delta n>|B\backslash V(P)| + |D\backslash V(P)|+\sqrt{\delta}n$. Similarly, we have $|N^-(d_1,C_1)\cap N^-(d_2,C_1)|>|B\backslash V(P)| + |D\backslash V(P)|+\sqrt{\delta}n$ for each $d_1,d_2\in D\backslash V(P)$. Thus, we can extend $P$ to $P^{\prime}$ by using vertices in $C_1$ to cover all vertices in $(B \cup D)\backslash V(P)$ with form
		$$P^{\prime}=C_1P C_1(DC_1)^{|D\backslash V(P)|}c(BC_1)^{|B\backslash V(P)|} \mbox{ with } c\in C_1.$$ Note that the vertex $c$ exists as each $b\in B$ and each $c_1\in C_1$  have many common out-neighbors in $C_1$. Indeed, we have $|N^+(b,C_1)\cap N^+(c_1,C_1)|\geqslant |C_1|/2-O(\delta n)$ by \ref{GB} and \ref{CP3}.

		Let $x$ and $y$ be the end-vertices of $P^{\prime}$ and let $C_0 = C_2 \cup (C_1 \backslash V(P^{\prime}))\cup\{x,y\}$.  Note that all vertices of $C_0$ are $\delta$-good as all bad vertices are covered by $P^{\prime}$. A simple calculation shows that for each $c\in C_0$, we have
		
		\begin{align*}
			d^\pm(c,C_0)\geqslant d^\pm(c,C_2)&\geqslant  \frac{|C_2|}{|C\backslash V(P)|} d^\pm (c,C\backslash V(P))-\varepsilon n\\
			&\geqslant \frac{|C_2|}{|C\backslash V(P)|} \left(d^\pm(c,C)-|C\cap V(P)|\right)-\varepsilon n\\
			&\geqslant \frac{|C_2|}{|C\backslash V(P)|} \left(|C\backslash V(P)|/2 - O(\delta n)\right)-\varepsilon n\\
			&\geqslant |C_2|/2 - \sqrt{\delta}n \geqslant 4|C_0|/9,
		\end{align*}
		where the above inequalities follow from \ref{GC}, \ref{L4}, \ref{CP2}, and the bound $|C_0\backslash C_2|\leqslant 2\sqrt{\delta}n$.
		
		By Lemma \ref{LEM-semitoexpander} and Theorem \ref{THM-HPfixendv}, there exists an antidirected Hamilton path $P^{\prime\prime}$ in $G[C_0]$ with end-vertices $x,y$. Then $P^{\prime
		}P^{\prime\prime}$ is  the desired antidirected Hamilton cycle of $G$, which completes the proof. 
	\end{proof}

	\section{Proof of Proposition \ref{PROP-degreesharp}}\label{SEC: pf-of-PROP-degreesharp}
	
	In this section, we will give a proof of Proposition \ref{PROP-degreesharp}. First let us recall the statement.
	\medskip
	
	\noindent \textbf{Proposition \ref{PROP-degreesharp} ~}
	For any even integer $n \geqslant4$, there are infinitely many oriented graphs $G$ on $n$ vertices  with $\sigma_{+-}(G)= \lceil (3n+2)/4 \rceil - 1$ which does not contain  antidirected Hamilton cycles.
	\begin{proof}
		Let $G$ be any oriented graph shown in Figure \ref{FIG-degreesharp}. As shown in Table \ref{TAB-degreesharp}, we habe $\sigma_{+-}(G)= \lceil (3n+2)/4 \rceil - 1$. Next we claim that $G$ has no antidirected Hamilton cycles. Suppose to the contrary that $G$ has an antidirected Hamilton cycle $L=v_1v_2\cdots v_nv_1$. 
		
		First we consider the case that $G$ is isomorphic to the graph shown in Figure \ref{FIG-degreesharp} (a) or (b). Assume without loss of generality that $v_1\in A$. Suppose first that $v_1$ is a source vertex of $L$, that is, $v_1v_2,v_1v_n\in E(G)$. Since $v_1$ has at most one out-neighbor in $V(G)\backslash (A\cup B)$, one of $v_2$ and $v_n$, say $v_2$, must be in $A\cup B$. Then $v_2$ is also a sink vertex of $L$, i.e. $v_3v_2\in E(G)$. Thus $v_3$ must be in $A\cup D$. Continue this procedure until all vertices are considered, we get that all vertices of $L$ with odd indices must belong to $A\cup D$ and vertices with even indices are in $A\cup B$. This implies that the antidirected Hamilton cycle $L=v_1v_2\cdots v_nv_1$ only uses the vertices in $A\cup B\cup D$, a contradiction. For the case that  $v_1$ is a sink vertex of $L$,  one may  verify similarly  that $L$  only uses the vertices in $A\cup B\cup D$, a contradiction again.

		So it suffices to consider the case that $G$ is isomorphic to the digraph shown in Figure \ref{FIG-degreesharp} (c). In this case, observe that all edges between $B$ and $D$ are useless when we embedded the antidirected cycle $L$. Indeed, suppose  $v_1v_2\in E(L)$ and it is embedded onto an edge from $D$ to $B$, then $v_3$ must be in $D$ as $v_2$ is a sink vertex of $L$. Similarly,  we get that all vertices of $L$ with odd indices must belong to $D$ and vertices with even indices are in $B$, a contradiction. Therefore, the cycle $L$ must be embedded with form $(CD)^sC\cdots C(BC)^tC\cdots C$, where the second ``$C\cdots C$'' may be omitted. This  means that the size of $C$ should be at least $|B|+|D|+2$. However, the digraph in (c) has order $4s+2$ with $|C|=2s, |B|=|D|=s+1$, a contradiction. Hence $G$ has no antidirected Hamilton cycles and this completes the proof.
	\end{proof}

	\begin{table}[H]
		\centering
		\caption{ \small The orders of $A,B,C,D$ and the Ore-type condition of $G$.}
		\label{TAB-degreesharp}
		\begin{tabular}{m{1.5cm}<{\centering}m{1.5cm}<{\centering}m{1.5cm}<{\centering}m{2cm}<{\centering}m{1.5cm}<{\centering}}
			\hline
			$n$  &  $\sigma_{+-}(G)$ & $|A|$ & $|B|=|D|$ & $|C|$ \\
			\hline
			$8s+6$ & $6s+4$ & $2s+1$ & $2s+2$ & $2s+1$ \\
			$4s$ & $3s$ & $1$ & $s$ & $2s-1$\\
			$4s+2$ & $3s+1$ & $0$ & $s+1$ & $2s$\\
			\hline
		\end{tabular}
	\end{table}

	\begin{figure}[H]
		\subfigure{
			\begin{minipage}[t]{0.32\linewidth}
				\centering
				\begin{tikzpicture}[black,line width=0.8pt,scale=0.8]
					\draw (0,1.6) circle (0.4);
					\draw (0,1.6) ellipse (1 and 0.6);
					\coordinate [label=center:$A$] (A) at (0,1.6);
					\draw (-2.3,0) ellipse (0.8 and 0.6);
					\coordinate [label=center:$D$] (D) at (-2.3,0);
					\draw (0,-1.6) circle (0.4);
					\draw (0,-1.6) ellipse (1 and 0.6);
					\coordinate [label=center:$C$] (C) at (0,-1.6);
					\draw (2.3,0) ellipse (0.8 and 0.6);
					\coordinate [label=center:$B$] (B) at (2.3,0);
					\draw [-stealth, line width=1.2pt] (0.05,1.2) -- (-0.05,1.2);
					\draw [-stealth, line width=1.2pt] (0.05,-1.2) -- (-0.05,-1.2);
					\draw[-stealth] [line width=2.5pt](1,1.3) -- (2,0.65);
					\draw[-stealth] [line width=2.5pt](-1,-1.3) -- (-2,-0.65);
					\draw[-stealth] [line width=2.5pt](2,-0.65) -- (1,-1.3);
					\draw[-stealth] [line width=2.5pt](-2,0.65) -- (-1,1.3);
					\filldraw[white](-1.45,0.1) circle (1pt)node[](u){};
					\path[draw, -stealth, line width=1.2pt] (u) edge[bend left=15] (1.45,0.1);      
					\filldraw[white](1.45,-0.1) circle (1pt)node[](v){};
					\path[draw, -stealth, line width=1.2pt] (v) edge[bend left=15] (-1.45,-0.1);
				\end{tikzpicture}\caption*{(a)  }
		\end{minipage}}
		\subfigure{
			\begin{minipage}[t]{0.32\linewidth}
				\centering
				\begin{tikzpicture}[black,line width=0.8pt,scale=0.8]
					\draw (0,-1.6) circle (0.4);
					\draw (0,-1.6) ellipse (1 and 0.6);
					\coordinate [label=center:$C$] (C) at (0,-1.6);
					\draw (-2.3,0) ellipse (0.8 and 0.6);
					\coordinate [label=center:$D$] (D) at (-2.3,0);
					\coordinate [label=center:$A$] (A) at (0,2);
					\draw (2.3,0) ellipse (0.8 and 0.6);
					\coordinate [label=center:$B$] (B) at (2.3,0);
					\draw [-stealth, line width=1.2pt] (0.05,-1.2) -- (-0.05,-1.2);
					
					\draw[-stealth] [line width=2.5pt](2,-0.65)--(1,-1.3) ;
					\draw[-stealth] [line width=2.5pt] (-2,0.65)--(-0.3,1.3);
					\draw[-stealth] [line width=2.5pt](0.3,1.3)--(2,0.65) ;
					\filldraw[black](0,1.3) circle (2pt)node[](){};
					
					\draw[-stealth] [line width=2.5pt](-1.4,0) -- (1.4,0);
					
					\draw[-stealth] [line width=1.2pt](-0.1,1.1) -- (-0.6,-1.6);
					\draw[-stealth] [line width=1.2pt](0.6,-1.6) -- (0.1,1.1);

					\draw[-stealth] [line width=2.5pt] (-1,-1.3) --(-2,-0.65);
					\filldraw[white](-1.45,0.1) circle (1pt)node[](u){};     
					\filldraw[white](1.45,-0.1) circle (1pt)node[](v){};
					
				\end{tikzpicture}\caption*{(b)  }
		\end{minipage}}
		\subfigure{
			\begin{minipage}[t]{0.32\linewidth}
				\centering
				\begin{tikzpicture}[black,line width=0.8pt,scale=0.8]
					\draw (0,-3) circle (0.4);
					\draw (0,-3) ellipse (1.1 and 0.7);
					\coordinate [label=center:$C$] (C) at (0,-3);
					\draw (-2.3,0) ellipse (0.8 and 0.6);
					\coordinate [label=center:$D$] (D) at (-2.3,0);
					
					\draw (2.3,0) ellipse (0.8 and 0.6);
					\coordinate [label=center:$B$] (B) at (2.3,0);
					\draw [-stealth, line width=1.2pt] (0.05,-2.6) -- (-0.05,-2.6);
					\draw[-stealth] [line width=2.5pt] (2,-0.65)--(0.5,-2.3);
					\draw[-stealth] [line width=2.5pt](-0.5,-2.3)--(-2,-0.65);
					\draw[-stealth] [line width=2.5pt] (-1.4,0)--(1.4,0);
					\filldraw[white](-1.45,0.1) circle (1pt)node[](u){};
					\filldraw[white](1.45,-0.1) circle (1pt)node[](v){};
				\end{tikzpicture}\caption*{(c)  }
		\end{minipage}}
		\caption{\small The oriented graphs in Proposition \ref{PROP-degreesharp}   with order $8s+6,4s$ and $4s+2$, respectively. The size of each sets is given in Table \ref{TAB-degreesharp}. The bold edges indicate that all possible edges are present and have the directed shown.  Each of $A$ and $C$ spans an almost regular tournament, that is, the in-degree and out-degree of every vertex differ by at most one. Both $B$ and $D$ are empty sets and, in (a) the oriented graph induced by $B$ and $D$ is an almost regular bipartite tournament. In (b), $A$ has order one and the vertex in $A$ has exactly one in-neighbor and out-neighbor in $C$.} 
		\label{FIG-degreesharp}
	\end{figure}

	\section{Conclusion}\label{SEC-remark}
	
	In this paper, we have established an Ore-type degree condition for the existence of antidirected Hamilton cycles in oriented graphs. More precisely, we prove that for sufficiently large even integer $n$, every oriented graph $G$ on $n$ vertices with $\sigma_{+-}(G)\geqslant (3n+2)/4$ contains an antidirected Hamilton cycle. Furthermore, we construct three surprising counterexamples showing that the degree condition is best possible. Our result contributes to the broader program of extending classical Hamiltonian results in graphs to oriented graphs, particularly for non-standard cycle orientations. It also refines earlier work on degree conditions for Hamiltonicity in digraphs and oriented graphs, and enhances our understanding of how local degree conditions can enforce global structural properties.

	An oriented graph $G$ is said to be \emph{vertex-pancyclic} if for every vertex $v\in V(G)$ and every integer $3\leqslant l\leqslant n$, there is a directed cycle of length $l$ containing $v$. Bondy \cite{bondy1971} proposed the meta-conjecture: almost any nontrivial condition on graphs implying a graph is Hamiltonian also implies that it is pancyclic (with possibly a few exceptional families of graphs). This meta-conjecture has been verified for several sufficient conditions, this motivates us to examine these sufficient conditions for vertex-pancyclicity, since vertex-pancyclicity implies pancyclicity. Recall that Chang et al. proved that there exists an integer $n_0$ such that every oriented graph $G$ on $n\geqslant n_0$ vertices with $\sigma_{+-}(G)\geqslant(3n-3)/4$ contains a directed Hamilton cycle (Theorem \ref{THM-changarXiv2025}).  We conjecture that the same condition also implies vertex-pancyclicity. 
	
	\begin{conjecture}
		There exists an integer $n_0$ such that every oriented graph $G$ on $n\geqslant n_0$ vertices with $\sigma_{+-}(G)\geqslant(3n-3)/4$ is vertex-pancyclic.
	\end{conjecture}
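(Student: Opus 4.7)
The plan is to mirror the expander/non-expander dichotomy that drives the proof of Theorem~\ref{THM-mainthm}. By Proposition~\ref{PROP:Ore-implies-semideg}, the hypothesis $\sigma_{+-}(G)\geqslant (3n-3)/4$ forces $\delta^0(G)\geqslant n/6+o(1)$, so $G$ has linear minimum semidegree throughout. Fix an arbitrary vertex $v\in V(G)$ and a target length $3\leqslant \ell\leqslant n$; the goal is to produce a directed $\ell$-cycle through $v$.

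If $G$ is a robust $(\nu,\tau)$-outexpander, I would first sample a random subset $S\subseteq V(G)$ of size $\ell$ containing $v$. By Lemma~\ref{LEM-randompartition}, the induced subdigraph $G[S]$ inherits both linear minimum semidegree and, when $\ell$ is linear in $n$, robust outexpansion; Theorem~\ref{THM-expanderanyori} then produces a directed Hamilton cycle of $G[S]$, which is the desired $\ell$-cycle through $v$. For short cycles (say $\ell\leqslant L_0$ for some absolute constant $L_0$) the induced expansion of $G[S]$ is too weak, so I would instead use the linear minimum semidegree together with a greedy path-extension argument to produce a directed $\ell$-cycle through $v$ from scratch. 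The most delicate range, $\ell=n-O(1)$, I would handle by invoking Theorem~\ref{THM-HPfixendv} with $v$ as a prescribed endpoint inside $G-R$, where $R$ is a carefully chosen constant-sized set avoiding $v$.

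In the non-expander case, Lemmas~\ref{LEM:non-expander-case} and~\ref{LEM-findbad} (whose hypotheses are weaker than $\sigma_{+-}(G)\geqslant (3n-3)/4$) yield an $\varepsilon$-nice partition $(A,B,C,D)$ in which essentially every arc goes around the directed $4$-cycle $A\to B\to C\to D\to A$. The canonical directed cycles therefore come from concatenating $k$ blocks of form $A\to B\to C\to D$ and have length $4k$. To realise arbitrary lengths, I would (i) for $v\in A\cup C$, exploit the fact that the induced subdigraphs $G[A]$ and $G[C]$ are almost-regular tournaments and hence vertex-pancyclic by Moon's theorem, absorbing the resulting tournament cycles as detours inside a canonical cyclic skeleton traversing the four parts; and (ii) for $v\in B\cup D$, use the many $A$- and $C$-neighbours of $v$ to insert it into such a cycle, and use the special arcs provided by Lemma~\ref{LEM:special-edges} to shift the total length by any residue modulo $4$.

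The main obstacle will be the non-expander case with the weaker degree bound $(3n-3)/4$ rather than $(3n+2)/4$. Under this weaker bound the special arcs of Lemma~\ref{LEM:special-edges} may be scarce or even absent, which is already signalled by the extremal configurations realising equality in Theorem~\ref{THM-changarXiv2025}. Consequently, the length-adjustment step would need to be carried out almost entirely inside $G[A]$ and $G[C]$; verifying that these tournaments remain robustly vertex-pancyclic after deleting $O(\delta n)$ bad vertices and the vertices reserved for the $B\cup D$ skeleton, together with handling the extremal blow-up examples directly (in the spirit of Proposition~\ref{PROP-degreesharp}), should constitute the bulk of the technical work.
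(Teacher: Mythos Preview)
The statement you are trying to prove is a \emph{conjecture} in the paper, not a theorem: it appears in Section~\ref{SEC-remark} as an open problem, and the paper offers no proof of it. There is therefore nothing in the paper to compare your proposal against. The only supporting evidence the authors give is Theorem~\ref{bangJGT} (Bang-Jensen and Guo), which treats the very special case of oriented graphs with minimum degree $n-2$.

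That said, your sketch as a proposed attack on the open problem has several genuine gaps. In the expander case, sampling a random subset $S$ of size $\ell$ and appealing to Lemma~\ref{LEM-randompartition} does not work when $\ell$ is sublinear: the error term in that lemma is $\varepsilon n$, which swamps all degree information inside a set of size $o(n)$, so $G[S]$ need not inherit either linear semidegree or robust outexpansion. Your greedy fallback for ``short'' cycles is not specified, and the intermediate range (say $\ell=\sqrt{n}$) is left entirely uncovered. In the non-expander case, Moon's theorem requires a strongly connected tournament, but $G[A]$ and $G[C]$ are only \emph{close} to almost-regular tournaments (they may miss $O(\delta n)$ arcs and contain $O(\delta n)$ bad vertices), so vertex-pancyclicity of these pieces is not immediate. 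Finally, as you yourself note, Lemma~\ref{LEM:special-edges} needs the stronger bound $(3n+2)/4$, so the length-adjustment mechanism via special arcs is unavailable under the conjecture's hypothesis $(3n-3)/4$; the extremal constructions for Theorem~\ref{THM-changarXiv2025} show this is a real obstruction, not merely a technicality.
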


	One direction is to investigate whether the bound $(3n-3)/4$ can be improved under additional assumptions or for specific classes of oriented graphs. The following result provides some support for this problem, as well as to the above conjecture.
	
	\begin{theorem}[\cite{bangJGTta}]\label{bangJGT}
		Let $G$ be an oriented graph on $n \geqslant 9$ vertices. If  $G$ is of minimum degree $n-2$ and  $\sigma_{+-}(G)\geqslant n-3$, then $G$ is vertex-pancyclic.
	\end{theorem}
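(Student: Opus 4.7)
My plan is to prove vertex-pancyclicity by first exploiting the rigid structure forced by $\delta(G)\geqslant n-2$, then producing a directed Hamilton cycle through each fixed vertex, and finally descending to every shorter length via chord contractions.

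The key structural fact is that $\delta(G)\geqslant n-2$ forces the complement of the underlying graph of $G$ to have maximum degree at most $1$, so the ``missing'' underlying edges form a matching $M$ with $|M|\leqslant n/2$. For every pair $\{x,y\}\notin M$ exactly one of $xy,yx$ lies in $E(G)$, while for pairs in $M$ neither arc is present. Hence $G$ is essentially a tournament with at most $n/2$ non-adjacent pairs, and many classical tournament techniques carry over. A short Ore-type computation then shows $G$ is strongly connected: if no arc leaves some proper nonempty $S\subsetneq V(G)$, choosing $x\in S$ with minimum $d^+(x)$ and $y\notin S$ with minimum $d^-(y)$ (within $V(G)\setminus S$) produces a non-arc $xy$ whose degree sum falls strictly below $n-3$ once the at most one missing neighbor of each vertex is accounted for, contradicting $\sigma_{+-}(G)\geqslant n-3$.

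Next I would show that every vertex $v$ lies on a directed Hamilton cycle. Take a longest cycle $C$ through $v$; if $V(C)\subsetneq V(G)$, a standard rotation-extension argument absorbs an outside vertex into $C$, using that each non-arc between a vertex outside $C$ and a vertex on $C$ must come from the matching $M$. Here the ``deficiency'' of $3$ in the bound $\sigma_{+-}\geqslant n-3$ (compared with Woodall's $n$ in Theorem \ref{THM-woodall}) is offset by the matching structure: each non-arc in the argument is charged to a unique edge of $M$, and this budget suffices.

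Having a Hamilton cycle $C_n$ through $v$, I then descend from length $\ell=n$ down to $3$: given a cycle $C_\ell=v_1v_2\cdots v_\ell v_1$ through $v$, hunt for a chord $v_iv_j\in E(G)$ with $j-i\in\{2,3\}$ (indices mod $\ell$) and with $v$ lying on the complementary arc, producing a cycle through $v$ of length $\ell-1$ or $\ell-2$; symmetric use of in-chords fills any parity gap. The base case $\ell=3$ reduces to finding a directed triangle through $v$, which is immediate from $d^+(v),d^-(v)\geqslant(n-3)/2$ and the near-tournament structure. The main obstacle I anticipate is controlling the descent near $v$'s partner in $M$ (if any): this single forced non-arc can block the ``natural'' chord needed to drop the length by one, so a careful case analysis according to the position of the partner on $C_\ell$, together with a few ad hoc checks at small lengths (presumably responsible for the hypothesis $n\geqslant 9$), should be needed to force a usable chord at every step.
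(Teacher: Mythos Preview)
The paper does not prove Theorem~\ref{bangJGT} at all: it is quoted in the concluding Section~\ref{SEC-remark} as a result of Bang-Jensen and Guo~\cite{bangJGTta}, cited only as evidence supporting the authors' conjecture on vertex-pancyclicity. There is therefore no ``paper's own proof'' against which to compare your proposal.

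That said, since you have sketched an argument, let me comment on it as a proof attempt. Your structural observation (the missing underlying edges form a matching) is correct and is indeed the starting point of the Bang-Jensen--Guo argument. Your strong-connectivity step is essentially sound once one uses that the minimum out-degree inside a set $S$ of size $s$ is at most $(s-1)/2$; your wording (``the at most one missing neighbor of each vertex is accounted for'') obscures rather than supplies this. Where the proposal becomes genuinely incomplete is in the two main steps. For the Hamilton cycle through $v$, ``a standard rotation-extension argument absorbs an outside vertex'' is an appeal, not a proof: with up to $n/2$ non-adjacent pairs the usual tournament insertion lemma can fail at exactly the vertex you need, and you have not explained how the matching budget is actually spent. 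For the descent, ``hunt for a chord $v_iv_j$ with $j-i\in\{2,3\}$'' together with ``a careful case analysis \ldots\ should be needed'' is a description of what remains to be done rather than an argument; the existence of such chords avoiding $v$'s segment is precisely the delicate part, and you have not shown it. In short, the outline is plausible and in the spirit of near-tournament pancyclicity proofs, but the two load-bearing steps are asserted rather than proved.
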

	
	The degree condition in Theorem \ref{bangJGT} is best possible in the sense
	that the conclusion fails if any one of the three conditions $n\geqslant 9$, $\delta(G)\geqslant n-2$ or $\sigma_{+-}(G)\geqslant n-3$ is relaxed.  For further details, see \cite{bangJGTta} and \cite{songJGT18}.

	Another natural and challenging problem is to determine whether a similar Ore-type degree condition can guarantees the existence of cycles with all possible orientations in oriented graphs. In this direction, we propose the following conjecture:
	
	\begin{conjecture}
		There exists an integer $n_0$ such that every oriented graph $G$ on $n\geqslant n_0$ vertices with $\sigma_{+-}(G)\geqslant(3n+2)/4$  contains all possible orientations of a (undirected) Hamilton cycle.
	\end{conjecture}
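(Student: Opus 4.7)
The plan is to follow the same dichotomy used in the proof of Theorem \ref{THM-mainthm}: split into the robust outexpander case and the non-expander case, then handle each separately for an arbitrary orientation $\vec{H}$ of $C_n$.

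The expander case requires no additional work. Indeed, Lemma \ref{LEM:expander-case} already asserts that when $G$ is a robust $(\nu,\tau)$-outexpander with $\sigma_{+-}(G) \geqslant (n+3\gamma n)/2$, then $G$ contains every possible orientation of a Hamilton cycle, not only the antidirected one. Since $(3n+2)/4 \geqslant (n+3\gamma n)/2$ for suitably small $\gamma$, this case is immediate.

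In the non-expander case, Lemma \ref{LEM:non-expander-case} supplies an $\varepsilon$-nice partition $(A,B,C,D)$, and the ``rotation'' structure $A\to B\to C\to D\to A$ (with tournament behaviour inside $A$ and $C$) should be exploited exactly as in Section \ref{SEC: non-expander case}. I would first reclassify $\delta$-bad vertices with respect to the acceptable sets as in the proof of Lemma \ref{LEM-goodpath}, and then generalise the notion of a $D$-proper path: redefine a \emph{proper segment} to be a short oriented path isomorphic to a sub-path of $\vec{H}$, whose end-vertices are $\delta$-good vertices in the parts dictated by the rotation at the corresponding location of $\vec{H}$. The analogues of Lemmas \ref{LEM:special-edges} and \ref{LEM-goodpath} should yield two vertex-disjoint proper segments anchored at the special arcs produced by the Ore-type condition. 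Then, in place of Lemma \ref{LEM-ACpath}, one would build an initial oriented path $P$ that respects $\vec{H}$, covers $A\cup\mathcal{B}_A\cup\mathcal{B}_C$ together with the two proper segments and a long auxiliary path $P^{\ast}$ absorbing the surplus of $|C|$ over $|B|+|D|$, with end-vertices that match the required sink/source pattern of $\vec{H}$ at that position. Finally, a random split $C_1,C_2$ of $C\setminus V(P)$ as in the proof of Theorem \ref{THM:non-expander-case}, followed by an application of Theorem \ref{THM-HPfixendv} inside $G[C_0]$, should close $\vec{H}$ up.

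The main obstacle will be the generalised absorber step. In the antidirected case, the orientation forces a perfect alternation between the two sides $A\cup D$ and $B\cup C$, so the path $P$ only needs to respect a two-colour bipartite parity. For a general orientation $\vec{H}$, each maximal directed sub-path of length $k$ must be embedded as $X_0\to X_1\to\cdots\to X_k$ along the rotation $A\to B\to C\to D\to A$, which introduces a $\bmod~4$ constraint linking sink/source positions to part sizes. One must therefore track, for each of the four parts, the exact number of vertices of $\vec{H}$ routed into it and verify that these counts are compatible with $|A|+|C|=n/2\pm O(\varepsilon n)$ and $|B|,|D|=n/4\pm O(\varepsilon n)$; the slack in $\sigma_{+-}(G)\geqslant(3n+2)/4$ (the same slack that powers the antidirected case) should provide enough room to rebalance through the auxiliary path $P^{\ast}$ and through the choice of $|C_1|$ in \ref{CP1}. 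Once this bookkeeping is carried out, the remainder of the argument follows the template of Section \ref{SEC: non-expander case} essentially unchanged, with Theorem \ref{THM-HPfixendv} supplying any oriented Hamilton path between prescribed endpoints in $G[C_0]$.
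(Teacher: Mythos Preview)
This statement is posed in the paper as an open \emph{conjecture}, not a theorem; the paper contains no proof of it. So there is no ``paper's own proof'' to compare against, and your proposal should be read as a research plan rather than as a verified argument.

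As a plan, the expander case is indeed handled by Lemma~\ref{LEM:expander-case}. The non-expander case, however, has a genuine gap that you yourself flag but do not resolve. The entire apparatus of Section~\ref{SEC: non-expander case} --- special arcs, $D$-proper paths, the path $P^{\ast}$ inside $E(D,B)$, and the final closing inside $G[C_0]$ --- is calibrated to the antidirected orientation. Two concrete places where your generalisation breaks down: (i) Lemma~\ref{LEM:special-edges} produces two disjoint special arcs, and this is exactly what an antidirected cycle needs to ``cross'' the partition twice; but for other orientations the number and type of crossing arcs required depends on the sink/source pattern of $\vec{H}$ (the consistently directed cycle needs none, while orientations with few changes of direction may be realisable entirely through the tournament structure of $C$), so it is not clear what the correct analogue of Lemma~\ref{LEM:special-edges} is or that the Ore condition supplies it. (ii) The $\bmod~4$ allocation you mention is not a bookkeeping detail: in the nice partition one may have $|A|=o(n)$ and $|C|\approx n/2$, and for an orientation with long directed runs the runs must live inside $A$ or $C$; routing them while simultaneously hitting every vertex of $B$ and $D$ exactly once imposes arithmetic constraints on $|B|,|D|,|C|$ relative to the run-length multiset of $\vec{H}$ that are not obviously satisfiable by adjusting $P^{\ast}$ or $|C_1|$. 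Your phrases ``should yield'', ``should provide enough room'', and ``essentially unchanged'' are precisely where a proof is missing; until those steps are carried out, this remains the conjecture the authors stated.
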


	\bibliographystyle{plain}

\end{document}